\tikzset{ 1
table/.style={
  matrix of math nodes,
  row sep=-\pgflinewidth,
  column sep=-\pgflinewidth,
  nodes={rectangle,text width=3em,align=center},
  text depth=1.25ex,
  text height=2.5ex,
  nodes in empty cells,
  left delimiter=[,
  right delimiter={]},
  ampersand replacement=\&
}
}
\numberwithin{equation}{subsection}
\newtheorem{theorem}[subsection]{Theorem}
\newtheorem{corollary}[subsection]{Corollary}
\newtheorem{lemma}[subsection]{Lemma}
\newtheorem{proposition}[subsection]{Proposition}
\theoremstyle{definition}
\newtheorem{definition}[subsection]{Definition}
\newtheorem{notation}[subsection]{Notation}
\def\calC{\mathcal{C}}
\def\calO{\mathcal{O}}
\def\CC{\mathbb{C}}
\def\FF{\mathbb{F}}
\def\GG{\mathbb{G}}
\def\QQ{\mathbb{Q}}
\def\SS{\mathbb{S}}
\def\ZZ{\mathbb{Z}}
\DeclareMathOperator{\NP}{NP}
\newcommand{\Diag}{\mathrm{Diag}}
\newcommand{\Norm}{\mathrm{Norm}}
\newcommand{\HP}{\mathrm{HP}}
\newcommand{\UP}{\mathrm{UP}}
\newcommand{\Tr}{\mathrm{Tr}}
\begin{document}\large

\title{Newton slopes for twisted Artin--Schreier--Witt Towers} 

\author{Rufei Ren}

\address{University of Rochester, Department of
	Mathematics,  Hylan Building, 140 Trustee Road, Rochester, NY 14627}

\email{rren2@ur.rochester.edu}
\date{\today}

\begin{abstract}
We fix a monic polynomial $f(x) \in \FF_q[x]$ over a finite field of characteristic $p$ of degree relatively prime to $p$. Let $a\mapsto \omega(a)$ be the Teichm\"uller lift of $\FF_q$, and let $\chi:\ZZ_p\to \CC_p^\times$ be a finite character of $\ZZ_p$. 
The $L$-function associated to the polynomial $f$ and the so-called twisted character $\omega^u\times \chi$ is denoted by $L_f(\omega^u,\chi,s)$ (see Definition~\ref{L-function}).  
We prove that, when the conductor of the
character is large enough, the $p$-adic Newton slopes of this $L$-function form arithmetic progressions.
\end{abstract}

\subjclass[2010]{11T23 (primary), 11L07 11F33 13F35 (secondary).}
\keywords{Artin--Schreier--Witt towers, $T$-adic exponential sums, Slopes of Newton polygon, $T$-adic Newton polygon for Artin--Schreier--Witt towers, Eigencurves}
\maketitle

\setcounter{tocdepth}{1}
\tableofcontents

\section{Introduction}
Let $p$ be a prime number. Let $\FF_q$ be the finite field of $q=p^a$ elements. 
Let \begin{align*}
\omega:&\FF_q\to\ZZ_q\\
&\alpha\ \mapsto \omega(\alpha):=\hat \alpha
\end{align*} be the Teichm\"uller lift of $\FF_q$. For any $u\in\{0,1,\cdots, q-1\}$, we put \begin{align*}
\omega^u: &\FF_q\to\ZZ_q\\
&\alpha\ \mapsto \omega^u(\alpha)=\hat \alpha^u.
\end{align*} 
We fix a monic polynomial $f(x) = x^d +a_{d-1}x^{d-1} + \cdots + a_0 \in \FF_q[x]$ of degree $d$ which is coprime to $p$. 
Set $ a_d=1$ and put $\hat a_i: = \omega( a_i)$ for $i=0, \dots, d$. The \emph{Teichm\"uller lift} of the polynomial $f(x)$ is defined by 
$\hat f(x):=x^d + \hat a_{d-1}x^{d-1} + \cdots + \hat a_0 \in \ZZ_q[x].$ 

The non-twisted $L$-function associated to $f(x)$ and a finite character $\chi:\ZZ_p\to \CC_p^\times$ is defined as 
	\begin{equation}
L_f(\chi,s)= \prod\limits_{x \in |\GG_{m,\FF_q}|}
\frac{1}{1-\chi\Big(\Tr_{\QQ_{q^{\deg(x)}}/\QQ_{p}}\big(\hat f(\hat x)\big)\Big)s^{\deg(x)}},
\end{equation}
where $\GG_{m,\FF_q}$ is the one-dimensional torus over $\FF_q$ and $\deg(x)$ stands for the degree of $x$.

In \cite{Davis-wan-xiao}, Davis, Wan, and Xiao  proved that 
\begin{itemize}
	\item If $\chi_1$ and $\chi_2$ are two finite characters with the same conductor $p^m\geq \frac{ap(d-1)^2}{8d}$, then $L_f(\chi_1,s)$ and $L_f(\chi_2,s)$ have the same Newton polygon.
	\item Let $\chi_0:\ZZ_p\to \CC_p^\times$ be a fixed character with conductor $p^{\lceil\log_p \frac{ap(d-1)^2}{8d}\rceil}.$ Then the $p$-adic Newton slopes of $L_f(\chi_1,s)$ (see Definition~\ref{Newton slopes}) form a disjoint union of arithmetic progressions determined by the $p$-adic Newton slopes of $L_f(\chi_0,s)$.
\end{itemize}
In \cite{bfz}, Blache, Ferard, and Zhu studied the so-called twisted $L$-functions (see Definition~\ref{L-function}), whose $p$-adic Newton polygons satisfy a universal lower bound proved by C.Liu and W.Liu in \cite{liu-liu}. This lower bound is similar to the one given in \cite{Davis-wan-xiao}. Therefore, 
it is of interest to ask  if the $p$-adic Newton slopes of the twisted $L$-functions also form arithmetic progressions. In this paper, we give an upper bound for the twisted $L$-function and prove that it coincides with the lower bound at $x=kd$ for any integer $k\geq0$. As a consequence, we prove that its $p$-adic Newton slopes indeed form arithmetic progressions.

\begin{notation}
For an integer $u$ in the set $\{0,1,\dots,q-2\}$, we put $$u=\sum\limits_{i=0}^{a-1} u(i)p^i,$$ where $0\leq u(i)\leq p-1$ for any $0\leq i\leq a-1$. 
\end{notation}

\begin{definition}\label{L-function}
	Let $\chi:\ZZ_p\to \CC_p^\times$ be a finite character with conductor $p^{m_\chi}$. The \emph{twisted $L$-function} associated to the characters $\chi$ and $\omega^u$ is defined by 
	\begin{equation}
	L_f(\omega^u,\chi,s)= \prod\limits_{x \in |\GG_{m,\FF_q}|}
	\frac{1}{1-\omega^u\circ\Norm_{\FF_{q^{\deg(x)}}/\FF_q} (x)\cdot\chi\Big(\Tr_{\QQ_{q^{\deg(x)}}/\QQ_{p}}\big(\hat f(\hat x)\big)\Big)s^{\deg(x)}},
	\end{equation}
	where $\GG_{m,\FF_q}$ is the one-dimensional torus over $\FF_q$ and $\deg(x)$ stands for the degree of $x$.
	In \cite{liu-wei}, Liu and Wei prove that the $L$-function $L_f(\omega^u,\chi,s)$ is a polynomial of degree $dp^{m_\chi-1}$.
\end{definition}

\begin{notation}
	For simplicity of notations, we denote $$y_u(k):=\frac{ak(k-1)(p-1)}{2d}+\frac k d\sum\limits_{i=0}^{a-1}u(i).$$
\end{notation}
\begin{definition}\label{Newton slopes}
	We call the slopes of the line segments of the $p$-adic Newton polygon of $L_f(\omega^u, \chi, s)$ the \emph{$p$-adic Newton slopes} of $L_f(\omega^u, \chi, s)$.
\end{definition}
In this paper, we prove the following.
\begin{theorem}\label{main}
	\hfill
	\begin{itemize}
		\item[(a)]
		The $p$-adic Newton polygon of $L_f(\omega^u,\chi, s)$ passes through the points $$\big(kd, \frac{y_u(kd)}{(p-1)p^{m_\chi-1}}\big)\quad \textrm{for any}\ 0\leq k\leq p^{m_\chi-1}.$$
		\item[(b)] The $p$-adic Newton polygon of $L_f(\omega^u, \chi, s)$ has slopes (in increasing order) 
		\[\bigcup_{k=1}^{p^{m_\chi-1}} \{\alpha_{k 1},\alpha_{k 2},\dots,\alpha_{k d}\}, 
		\]
		where
		\[\frac{a(k-1)}{p^{m_{\chi}-1}}+\frac{\sum\limits_{i=0}^{a-1}u(i)}{d(p-1)p^{m_\chi-1}}\leq \alpha_{k j}\leq \frac{a(k-1)}{p^{m_{\chi}-1}}+\frac{\sum\limits_{i=0}^{a-1}u(i)}{d(p-1)p^{m_\chi-1}}+
		\frac{a(d-1)}{dp^{m_\chi-1}}\]
	\end{itemize}
for any $1\leq j\leq d$.
\end{theorem}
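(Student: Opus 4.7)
I first reduce part (b) to part (a). A direct computation shows that the midpoint of the interval in (b) equals the average slope $\bigl(y_u(kd)-y_u((k-1)d)\bigr)/\bigl(d(p-1)p^{m_\chi-1}\bigr)$ of the Newton polygon on the segment $[(k-1)d,kd]$. Granted (a), the Newton polygon has vertices at each $\bigl(kd,y_u(kd)/\bigl((p-1)p^{m_\chi-1}\bigr)\bigr)$, so on this segment the sum of the $d$ slopes $\alpha_{k1},\dots,\alpha_{kd}$ is pinned; combined with the Liu--Liu universal lower bound applied at every intermediate integer abscissa and the convexity of the Newton polygon, each individual slope is then forced into the stated interval. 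So the entire substantive content of the theorem is (a).

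To prove (a), I will follow the Dwork--Monsky trace formula strategy at the level of a $T$-adic $L$-function. Let $L_f(\omega^u,T,s)$ denote the universal $T$-adic $L$-function whose specialization at $T=\chi(\gamma_0)-1$, for $\gamma_0$ a topological generator of $1+p\ZZ_p$, recovers $L_f(\omega^u,\chi,s)$ after a suitable normalization of slopes. Via the Dwork splitting function, one writes $L_f(\omega^u,T,s)=\det(1-s\Psi_u)$, where $\Psi_u$ is a completely continuous Dwork-type operator on a $T$-adic Banach module with an orthonormal basis indexed by $\NN$, and whose matrix entries admit an explicit $T$-adic expansion coming from the Teichm\"uller lift $\hat f$ and the twist by $\omega^u$.

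The crux is the exact computation of the $T$-adic valuation of the coefficient $c_{kd}$ of $s^{kd}$ in $\det(1-s\Psi_u)$. This coefficient is a signed sum of size-$kd$ principal minors of $\Psi_u$. My plan is to single out a distinguished ``Hodge subset'' of basis indices of size $kd$, dictated by the twist $u$, for which the principal minor has $T$-adic valuation exactly $y_u(kd)$, while every other principal minor has strictly larger $T$-adic valuation. The latter statement follows from the Hodge-type row estimates underlying the Liu--Liu lower bound. The former reduces to verifying that a specific sum over a symmetric group representing the leading $T$-adic coefficient of the distinguished minor is a $p$-adic unit; I plan to recognize this sum as a Vandermonde-type determinant in the leading coefficient of $\hat f$.

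The main obstacle I anticipate is precisely this nonvanishing of the leading $T$-adic term of the distinguished minor. The twist by $\omega^u$ both shifts the natural Hodge filtration by $\sum_iu(i)/d$ and inserts an extra $\omega^u$-factor in each matrix entry, so the underlying combinatorics differ subtly from the untwisted case treated in \cite{Davis-wan-xiao} and demand a separate verification. Once this is in hand, specializing at $T=\chi(\gamma_0)-1$ converts the $T$-adic valuations into $p$-adic valuations of the coefficients of $L_f(\omega^u,\chi,s)$ and, compared with the Liu--Liu lower bound, yields equality at $x=kd$, completing (a).
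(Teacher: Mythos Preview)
Your reduction of (b) to (a) is essentially the paper's argument: once the Newton polygon is pinned to the Hodge polygon at the multiples of $d$, the sandwich between the Hodge polygon below and convexity above forces each slope into the interval $[h_1,h_d]$ of Hodge slopes on that block. No issue there.

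For (a), however, your route diverges from the paper's, and the divergence is exactly at the point you flag as the main obstacle. You propose to compute the leading $T$-adic term of the distinguished principal minor directly and to show it is a $p$-adic unit by a Vandermonde-type identity adapted to the twist. The paper does \emph{not} do this. Instead it argues indirectly: set $g(x)=f(x^{q-1})$ and use the factorization
\[
C_g(\omega^0,T,s)=\prod_{u=0}^{q-2}C_f(\omega^u,T,s).
\]
For the untwisted left-hand side, the result of \cite{Davis-wan-xiao} already gives that $\NP_g(C,\omega^0,T)$ passes through the points $\bigl(kd(q-1),\tfrac{ak((q-1)kd-1)(p-1)}{2}\bigr)$. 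On the right-hand side, the Hodge lower bound (Proposition~\ref{HP}) forces each $\NP_f(C,\omega^u,T)$ to lie on or above $\HP(d,\omega^u,T)$, and these Hodge polygons assemble (via Lemma~\ref{technical lemma}) so that $\bigoplus_u\HP(d,\omega^u,T)$ passes through the same points. A direct arithmetic check shows
\[
\sum_{u=0}^{q-2}y_u(kd)=\frac{ak((q-1)kd-1)(p-1)}{2},
\]
so the sum of the Hodge heights already equals the known height of $\NP_g(C,\omega^0,T)$. Since each factor is bounded below by its Hodge contribution and the total is exact, every factor must touch its Hodge polygon at $x=kd$. The $p$-adic unit statement for the leading coefficient of $r_{u,kd}(T)$ then falls out of the same equality, because the product of these leading coefficients is the (known to be a unit) leading coefficient on the $g$-side.

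So the paper trades your direct minor computation for a bootstrapping from the untwisted case of \cite{Davis-wan-xiao}. This completely sidesteps the Vandermonde verification you identify as the obstacle. Your approach may well go through, but the twisted combinatorics (the shift of the Hodge filtration by $\sum_iu(i)/d$, the extra $\omega^u$-factors, and the fact that the trace formula involves the product $\sigma^{a-1}(N)\cdots\sigma(N)N$ rather than a single matrix) make the direct unit computation genuinely more delicate than in the untwisted setting; as written, your proposal leaves this as a hope rather than a proof. If you want to pursue the direct route, you would need to exhibit the leading-term determinant explicitly and check nonvanishing modulo $p$; otherwise, the factorization argument gives (a) with no such computation.

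A minor technical point: the Dwork trace formula in this setting computes the characteristic power series $C_f(\omega^u,T,s)=\prod_{i\geq 0}L_f(\omega^u,T,q^is)$, not $L_f$ itself, so your $\det(1-s\Psi_u)$ is $C_f$ rather than $L_f$. This does not affect the strategy, since the passage from $C_f$ back to $L_f$ is the standard slope-truncation, but it should be stated correctly.
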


When the conductor $p^{m_\chi}$ of $L_f(\omega^u,\chi, s)$ is large enough, the $p$-adic Newton slopes of $L_f(\omega^u,\chi, s)$ have the following property.
\begin{theorem}[Main theorem]\label{strong}
	Let $m_0$ be the minimal positive integer such that $p^{m_0}> \frac{adp}{8(p-1)}$
	and let $0\leq \alpha_1,\alpha_2,\dots,\alpha_{dp^{m_0-1}}<a$
	denote the slopes of the $p$-adic Newton polygon of $L_f(\omega^u,\chi_0,s)$ for a
	finite character $\chi_0:\ZZ_p\to \CC_p^\times$
	with $m_{\chi_0} = m_0$. Then for any finite character $\chi:\ZZ_p\to \CC_p^\times$
	with $m_\chi\geq m_0$, the $p$-adic Newton polygon of $L_f(\omega^u,\chi,s)$ has slopes
	\begin{equation}
	\bigcup_{i=0}^{p^{m_\chi-m_0}-1}\Big\{\frac{\alpha_1+ai}{p^{m_\chi-m_0}},\frac{\alpha_2+ai}{p^{m_\chi-m_0}},\dots, \frac{\alpha_{dp^{m_0-1}}+ai}{p^{m_\chi-m_0}}\Big\}.
	\end{equation}
\end{theorem}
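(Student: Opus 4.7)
The plan is to derive Theorem~\ref{strong} from Theorem~\ref{main} by matching Newton polygons across the two conductor levels $p^{m_0}$ and $p^{m_\chi}$, and then appealing to the universal $T$-adic construction underlying Theorem~\ref{main} to promote block-level matching to an equality of individual slopes.

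First I would apply Theorem~\ref{main}(b) to both $\chi_0$ and $\chi$ to organise the $p$-adic slopes into disjoint blocks of size $d$, indexed by $k'\in\{1,\dots,p^{m_0-1}\}$ for $\chi_0$ and by $k\in\{1,\dots,p^{m_\chi-1}\}$ for $\chi$. Writing $k=k'+ip^{m_0-1}$ uniquely with $0\leq i<p^{m_\chi-m_0}$, a direct computation shows that the bounding interval of block $k$ of $\chi$ is the image of the bounding interval of block $k'$ of $\chi_0$ under the affine map $x\mapsto(x+ai)/p^{m_\chi-m_0}$. Invoking Theorem~\ref{main}(a) for both characters likewise forces the sum of slopes in block $k$ of $\chi$ to equal $\frac{1}{p^{m_\chi-m_0}}\bigl((\text{sum in block }k'\text{ of }\chi_0)+adi\bigr)$, which is exactly the sum predicted by the arithmetic-progression formula in the statement of Theorem~\ref{strong}.

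To upgrade from matching block sums and bounding intervals to matching individual slopes, I would exploit the common $T$-adic characteristic series $C_f(\omega^u,T,s)$ whose specialisation at $T=\pi_\chi$ produces $L_f(\omega^u,\chi,s)$. The hypothesis $p^{m_0}>\frac{adp}{8(p-1)}$ is precisely the threshold past which the upper bound of Theorem~\ref{main} saturates the Liu--Liu lower bound, so for every $m_\chi\geq m_0$ the $p$-adic Newton polygon of $L_f(\omega^u,\chi,s)$ is controlled by the same universal $T$-adic polygon, merely read off at the finer normalisation $v_p(\pi_\chi)=\frac{1}{(p-1)p^{m_\chi-1}}$. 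This finer reading of the same universal object is what unfolds each slope $\alpha_j$ at level $m_0$ into the $p^{m_\chi-m_0}$ arithmetic-progression slopes $(\alpha_j+ai)/p^{m_\chi-m_0}$ at level $m_\chi$.

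The principal obstacle is exactly this last step. The combinatorial data produced by Theorem~\ref{main} --- namely block sums together with block interval bounds --- is a priori compatible with several distinct multisets of slopes inside each block of $d$, so one cannot conclude by pure block arithmetic. The argument must therefore pass through the explicit $T$-adic characteristic series $C_f(\omega^u,T,s)$ and verify that its $T$-adic Newton polygon genuinely interpolates the $p$-adic Newton polygons of $L_f(\omega^u,\chi,s)$ at all admissible conductors in the claimed arithmetic-progression pattern; this interpolation, rather than the bare combinatorics of Theorem~\ref{main}, is the true engine that turns the sum-matching identity of the previous paragraph into the slope-by-slope equality asserted by Theorem~\ref{strong}.
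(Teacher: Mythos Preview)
Your overall strategy---pass to the universal $T$-adic characteristic series $C_f(\omega^u,T,s)$ and show that past the threshold its specialisations at $T=\chi(1)-1$ all have the same normalised Newton polygon---is exactly what the paper does. The block-matching exercise via Theorem~\ref{main} in your first two paragraphs is an unnecessary detour: the paper never compares blocks of $\chi_0$ and $\chi$ directly, and as you yourself note, that comparison cannot by itself pin down individual slopes.

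The genuine gap is the step you flag as ``the principal obstacle,'' and your description of why it works is not quite right. The upper bound $\UP(d,\omega^u,T)$ does \emph{not} saturate the Hodge lower bound $\HP(d,\omega^u,T)$; there is always a nonzero gap between them. What the paper actually proves (Lemma~\ref{distance}) is that the vertical distance between $\UP$ and $\HP$ is at most $\tfrac{ad(p-1)}{8}$, and then (Proposition~\ref{independent}) argues as follows: writing $r_{u,k}(T)=\sum_{j\geq y_u(k)} r_{u,k,j}T^j$, any non-unit coefficient $r_{u,k,j}$ contributes $p$-adic valuation at least $1+\tfrac{j}{(p-1)p^{m_\chi-1}}$ upon specialisation, and the condition $p^{m_\chi-1}>\tfrac{ad}{8}$ forces this to exceed $\tfrac{\UP(k)}{(p-1)p^{m_\chi-1}}$, hence to lie above the upper bound and be irrelevant to the polygon. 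Thus the $p$-adic polygon is determined by the positions $i(k)$ of the first \emph{unit} coefficients in each $r_{u,k}(T)$, and these are manifestly independent of $\chi$. That independence, together with the slope decomposition $R(\NP_f(C,\omega^u,\chi))\cap[0,ak)=\biguplus_{i=0}^{k-1}\bigl(ai+R(\NP_f(L,\omega^u,\chi))\bigr)$, immediately gives the arithmetic progressions. You have correctly located the engine; you are missing the vertical-distance bound and the unit-coefficient argument that makes it turn.
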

Theorem~\ref{strong} says that when $m_\chi$ is large enough, the $p$-adic Newton slopes of $L_f(\omega^u,\chi, s)$ form a disjoint union of arithmetic progressions determined by the $p$-adic Newton slopes of $L_f(\omega^u,\chi_0, s)$. A similar result is proved by Li in \cite{li} for the general Witt towers without twisting.

This paper is inspired by the $p$-adic Newton slopes of $L_f(\chi,s)$ in arithmetic progressions (proved in \cite{Davis-wan-xiao}), the twisted decomposition of $$L_{f(x^{q-1})}(\chi, s):=\prod_{u=0}^{q-2} L_f(\omega^u,\chi,s)$$ in 
\cite{bfz}, and the lower bound for the Newton polygon of $L_{f}(\omega^u,\chi, s)$ given in \cite{liu-liu}. Let $$\calC_0\leftarrow \calC_1\leftarrow\cdots\leftarrow \calC_m\leftarrow\cdots$$ be the Artin--Schreier--Witt curve tower associated to the polynomial $f(x^{q-1})$, and let $Z(\calC_m, s)$ be the zeta function of the curve $\calC_m$. 
It is known that
 \[\begin{cases}
L_{f}(\omega^u,\chi, s)&\textrm{if}\ u\neq 0\\
\frac{L_{f}(\omega^u,\chi, s)}{1-\chi(\hat f(0))}&\textrm{if}\ u= 0
\end{cases}\] are factors of $Z(\calC_m, s)$, and the degree of $L_f(\omega^u,\chi_0,s)$ is $\frac{1}{q-1}$ of the degree of $L_f(\chi_0,s))$. Therefore, as a corollary of Theorem~\ref{strong}, we give a more precise description of zeros of $Z(\calC_m, s)$ than the one given in \cite{Davis-wan-xiao}. 
After we posted this paper on Arxiv, we are informed that there is a similar result obtained independently by Liu, Liu, and Niu.
\subsection*{Acknowledgments}
The author thanks Dennis A. Eichhorn, Karl Rubin, Daqing Wan and Liang Xiao for many valuable discussions and suggestions.
	
\section{Notation}
In this section, we introduce some notations that we will use through out the paper.
\begin{notation}
	 We write $v_{T}(-)$ for the $T$-adic valuation of elements in $\CC_p[T]$ and 
	$v_{p}(-)$ for the $p$-adic valuation of elements in $\CC_p$.
\end{notation}

\begin{definition}\label{Newton polygon}
	Given a set $S:=\big\{(k,d_k)\;\big|\;0\leq k\leq n\big\}$. The \emph{Newton polygon} of $S$, denoted by $\NP(S)$, is the lower convex hull of points in $S$. We call $n$ \emph{the length} of $\NP(S)$.
	
	For a power series $F(s)=\sum\limits_{i=0}^n \mathbbm{u}_i(T)s^i$, we put $$\NP_T(F):=\NP\Big(\Big\{(k,v_T(\mathbbm u_k))\;\big|\;0\leq k\leq n\Big\}\Big),$$
	where $n\in \ZZ_{\geq 0}\cup\infty$.
\end{definition}

\begin{definition}\label{definition of R}
For a Newton polygon $\NP$, we write $R(\NP)$ for
the multiset of slopes in $\NP$.

It has an inverse, denoted by $R^{-1}$, mapping a multiset $\SS$ to the lower convex whose slopes coincide with this multiset. 
\end{definition}

\begin{notation}\label{properties for NP}
	\hfill
	\begin{itemize}
		\item[(a)] Let $\SS_1$ and $\SS_2$ be two multisets in $\QQ$. We denote by $$\SS_1\uplus\SS_2$$ the union of $\SS_1$ and $\SS_2$ as multisets.
	\item[(b)] For any two Newton polygons $\NP_1$ and $\NP_2$, we write $$\NP_1\oplus\NP_2$$
	for the Newton polygon whose slopes are the union of the slopes of $\NP_1$ and $\NP_2$.
	\item[(c)] We denote by
	$\NP(i)$ the height of  $\NP$ at $x=i$. 
	\item[(d)]For any $t\in \QQ$, we denote $t+\NP$
	be the Newton polygon such that $$(t+\NP)(k)=\NP(k)+t\quad\textrm{for any}\ 0\leq k\leq n,$$
	where $n$ is the length of $\NP$.
	\end{itemize}
\end{notation}
\begin{definition}
Let  $\NP_1$ and $\NP_2$ be two polygons of same length $n$. If $$\NP_1(k)\geq NP_2(k)$$ holds for any $0\leq k\leq n$, then we call that $\NP_1$ is above $\NP_2$ and denote this by $\NP_1\geq \NP_2$.
\end{definition}
\begin{lemma}\label{sum inequality}
	If $\Big\{\NP_{1,i}\;\big|\;1\leq i\leq m\Big\}$ and  $\Big\{\NP_{2,i}\;\big|\;1\leq i\leq m\Big\}$ are two sets of Newton polygons such that for any $1\leq i\leq m$, \begin{itemize}
		\item $\NP_{1,i}$ and $\NP_{2,i}$ have the same length, and
		\item $\NP_{1,i}\geq\NP_{2,i}$,
	\end{itemize}then \[\bigoplus\limits_{i=1}^m \NP_{1,i}\geq \bigoplus\limits_{i=1}^m \NP_{2,i}.\]
\end{lemma}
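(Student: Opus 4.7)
The plan is to reduce to the case $m=2$ by induction on $m$, using that the operation $\oplus$ is associative (since taking unions of slope multisets is associative).

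For the base case $m=2$, I rely on the following tropical-convolution description of $\oplus$: if $\NP'$ and $\NP''$ are Newton polygons of lengths $n'$ and $n''$ with matching starting heights, then for each $0 \leq k \leq n' + n''$,
\[
(\NP' \oplus \NP'')(k) \;=\; \min_{\substack{0 \leq j' \leq n',\ 0 \leq j'' \leq n''\\ j' + j'' = k}} \bigl(\NP'(j') + \NP''(j'')\bigr).
\]
This is a standard property of lower convex hulls: writing the slopes of $\NP'$ and $\NP''$ in increasing order, the height of $\NP'\oplus \NP''$ at $k$ is the sum of the $k$ globally smallest slopes in the combined multiset, and a short exchange argument shows that the optimum over splittings $(j', j'')$ is attained by taking the $j'$ smallest slopes from $\NP'$ and the $j''$ smallest slopes from $\NP''$, whose partial sums equal $\NP'(j')$ and $\NP''(j'')$ respectively.

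Granting this formula, the case $m=2$ is immediate: for each splitting $k = j_1 + j_2$, the hypothesis $\NP_{1,\ell} \geq \NP_{2,\ell}$ yields
\[
\NP_{1,1}(j_1) + \NP_{1,2}(j_2) \;\geq\; \NP_{2,1}(j_1) + \NP_{2,2}(j_2),
\]
and taking the minimum over splittings on both sides preserves the inequality, proving $(\NP_{1,1} \oplus \NP_{1,2})(k) \geq (\NP_{2,1} \oplus \NP_{2,2})(k)$ for all $k$. The induction step $m \rightsquigarrow m+1$ then follows by applying the $m=2$ case to $\bigoplus_{i=1}^m \NP_{1,i} \geq \bigoplus_{i=1}^m \NP_{2,i}$ (the inductive hypothesis) and $\NP_{1,m+1} \geq \NP_{2,m+1}$.

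The only obstacle worth flagging is the justification of the convolution formula; this is a routine fact about lower convex hulls that can be dispatched by a sorting/exchange argument, so no new machinery is required.
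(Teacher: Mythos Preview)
Your argument is correct. The paper's own proof is a single sentence --- ``It follows directly from the definition of `$\oplus$'.'' --- so you have supplied considerably more detail than the author does. Your route via the tropical-convolution formula $(\NP'\oplus\NP'')(k)=\min_{j'+j''=k}\bigl(\NP'(j')+\NP''(j'')\bigr)$ and induction on $m$ is a perfectly good way to make that one line precise; the exchange argument you sketch for the formula is standard and needs no further justification. One cosmetic point: the phrase ``with matching starting heights'' is a bit ambiguous --- what you actually need (and what the formula delivers) is that $\NP'\oplus\NP''$ starts at height $\NP'(0)+\NP''(0)$, which is the natural convention implicit in the paper's slope-union definition.
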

\begin{proof}
It follows directly from the definition of ``$\oplus$''. 
\end{proof}
\begin{definition}
	For any positive integer $\ell$, the sum \[S_{f,\omega^u,\chi}(\ell,s)=\sum_{x\in\FF^\times_{q^\ell}}\omega^u\circ\Norm_{\FF_{q^\ell}/\FF_q} (x)\cdot(1+T)^{\Big(\Tr_{\QQ_{q^{\ell}}/\QQ_{p}}\big( \hat f(\hat x)\big)\Big)}\in \ZZ_q[\![T]\!]\] is called a twisted $T$-adic exponential sum of $f(x)$. 
\end{definition}
	The $L$-function $L_f(\omega^u,T,s)$ (see Definition~\ref{L-function}) satisfies
\[L_f(\omega^u,T,s)= \exp(\sum_{\ell=1}^{\infty}S_{f,\omega^u,\chi}(\ell,s)\frac{s^\ell}{\ell}).\]
It is easy to check 
\begin{equation}\label{specialization of L}
L_f(\omega^u,T,s)\big|_{T=\chi(1)-1}=L_f(\omega^u,\chi,s).
\end{equation}

\begin{lemma}\label{core lemma}
	If we put $g(x):=f(x^{q-1})$, then
	\begin{equation}\label{core eq}
	L_g(\omega^0,T,s)=\prod_{u=0}^{q-2} L_f(\omega^u,T,s).
	\end{equation}
\end{lemma}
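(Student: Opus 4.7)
The plan is to take the formal logarithm of both sides and reduce~\eqref{core eq} to an identity between $T$-adic exponential sums. Since $L_f(\omega^u,T,s) = \exp\!\bigl(\sum_{\ell \ge 1} S_{f,\omega^u,\chi}(\ell,s)\, s^\ell/\ell\bigr)$ and the logarithm of a product is the sum of logarithms, proving \eqref{core eq} is equivalent to showing
\[
S_{g,\omega^0,\chi}(\ell,s) \;=\; \sum_{u=0}^{q-2} S_{f,\omega^u,\chi}(\ell,s)
\]
for every integer $\ell \ge 1$.

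To handle the right-hand side, I would interchange the summations over $u$ and over $x \in \FF_{q^\ell}^\times$, and then apply character orthogonality on $\FF_q^\times$: since $\{\omega^u\}_{u=0}^{q-2}$ runs through all characters of the cyclic group $\FF_q^\times$, the sum $\sum_{u=0}^{q-2}\omega^u(\alpha)$ equals $q-1$ if $\alpha = 1$ and vanishes otherwise. This collapses the RHS to
\[
(q-1)\sum_{\substack{y \in \FF_{q^\ell}^\times \\ \Norm_{\FF_{q^\ell}/\FF_q}(y) = 1}} (1+T)^{\Tr_{\QQ_{q^\ell}/\QQ_p}(\hat f(\hat y))}.
\]

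For the left-hand side, the multiplicativity of the Teichm\"uller lift gives $\hat g(\hat x) = \hat f(\hat x^{q-1})$. I would then identify the image of the map $x \mapsto x^{q-1}$ on $\FF_{q^\ell}^\times$ with the norm-one subgroup, either via Hilbert~90 or directly via the computation $\Norm(x^{q-1}) = x^{q^\ell-1} = 1$ together with an order count in the cyclic group $\FF_{q^\ell}^\times$. Since this map has kernel $\FF_q^\times$, it is $(q-1)$-to-one onto the norm-one subgroup, and regrouping terms in $S_{g,\omega^0,\chi}(\ell,s)$ according to the value of $x^{q-1}$ produces exactly the expression above.

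The only point that requires a little care is correctly matching the image of $x \mapsto x^{q-1}$ with the norm-one subgroup of $\FF_{q^\ell}^\times$ and tracking the multiplicity factor $q-1$ on both sides; everything else is a formal manipulation of exponential sums, so I do not expect a serious obstacle.
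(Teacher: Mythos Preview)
Your proof is correct and takes a genuinely different route from the paper's. Both arguments begin by reducing \eqref{core eq} to the identity $S_{g,\omega^0,\chi}(\ell,s) = \sum_{u=0}^{q-2} S_{f,\omega^u,\chi}(\ell,s)$ for each $\ell \ge 1$. From that point on, the paper expands $(1+T)^{\Tr_{\QQ_{q^\ell}/\QQ_p}(\hat f(\hat x))}$ as a formal power series $\sum_{n\ge 0} b_{\ell,n}(T)\,\hat x^{\,n}$ and uses orthogonality of the characters $\hat x \mapsto \hat x^{\,m}$ on $\FF_{q^\ell}^\times$ to rewrite each exponential sum as a sum of selected coefficients $b_{\ell,n}$; the desired equality then becomes an explicit reindexing of integer parameters. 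You instead apply orthogonality of $\{\omega^u\}_{u=0}^{q-2}$ on $\FF_q^\times$ to the value $\Norm_{\FF_{q^\ell}/\FF_q}(x)$, and match the result with the left-hand side via the group-theoretic fact that $x \mapsto x^{q-1}$ is $(q-1)$-to-one from $\FF_{q^\ell}^\times$ onto its norm-one subgroup. Your argument is more conceptual and avoids introducing the auxiliary series $b_{\ell,n}(T)$; the paper's version is more computational but makes the underlying index combinatorics fully explicit. Either approach is short and self-contained.
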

\begin{proof}
	Put $$(1+T)^{\Big(\Tr_{\QQ_{q^{\ell}}/\QQ_{p}}\big(\hat f(\hat x)\big)\Big)}:=\sum_{n=0}^\infty b_{\ell, n}(T)\hat x^n\in \ZZ_p[\![T]\!][\![\hat x]\!],$$
	where $b_{\ell, n}(T)\in \ZZ_p[\![T]\!]$ for all $n\geq 0$ and $\ell\geq 1$.
	
	Notice that for each $u\in\{0,1,\dots,q-2\}$, we have  $$S_{f,\omega^u,\chi}(\ell,s)=(q^\ell-1)\sum_{n=1}^\infty b_{\ell, (q^\ell-1)(n-\frac{u}{q-1})}(T).$$ 
	Therefore, by taking the sum of $u$ over the set $\{0,1,\dots, q-2\}$, we get 
\[
	\sum_{u=0}^{q-2}S_{f,\omega^u,T}(\ell,s)=(q^\ell-1)\sum_{u=0}^{q-2}\sum_{n=1}^\infty b_{\ell, (q^\ell-1)(n-\frac{u}{q-1})}(T)=(q^\ell-1)\sum_{n=1}^\infty b_{\ell, \frac{n(q^\ell-1)}{q-1}}(T).
\]
	
	On the other hand, by definition, it is easy to check that  $$S_{g,\omega^0,T}(\ell,s)=(q^\ell-1)\sum_{n=1}^\infty b_{\ell, \frac{n(q^\ell-1)}{q-1}}(T).$$Therefore, we have 
	\[S_{g,\omega^0,T}(\ell,s)=\sum_{u=0}^{q-2}S_{f,\omega^u,T}(\ell,s),\]
	for all $\ell\geq 1$, which implies 
	\[L_g(\omega^0,T,s)=\prod_{u=0}^{q-2} L_f(\omega^u,T,s).\qedhere\]
\end{proof}
\begin{definition}
	The \emph{characteristic power series} of $f$ is given by
	\begin{equation}
	C_f(\omega^u,T,s):=\prod_{i=0}^{\infty}L_f(\omega^u,T,q^is),
	\end{equation}
	which is shown as a $p$-adic entire power series in \cite{liu-liu-niu}.
\end{definition}
By Lemma~\ref{core lemma}, we know that 
$$C_g(\omega^0,T,s)=\prod_{u=0}^{q-2} C_f(\omega^u,T,s).$$

\begin{notation}
We denote by $\NP_f(L,\omega^u,T)$ (resp. $\NP_f(L,\omega^u,\chi)$) the $T$-adic Newton polygon (resp. $p$-adic Newton polygon) of $L_f(\omega^u,T,s)$ (resp. $L_f(\omega^u,\chi,s)$). 

Similarly, we write $\NP_f(C,\omega^u,T)$ and $\NP_f(C,\omega^u,\chi)$ for the $T$-adic Newton polygon (resp. $p$-adic Newton polygon) of $C_f(\omega^u,T,s)$ and $C_f(\omega^u,\chi,s)$ respectively.	
\end{notation}

\section{The T-adic Dwork's Trace Formula}
In this section, we recall properties of the $L$-function associated to a $T$-adic exponential
sum as considered by Liu and Wan in \cite{liu-wan}. Its specializations to
appropriate values of $T$ interpolate the $L$-functions considered above.

\begin{notation}
	We first recall that the \emph{Artin--Hasse exponential series} is defined by
	\begin{equation}\label{Artin-Hasse}
	E(\pi) = \exp\big( \sum_{i=0}^\infty \frac{\pi^{p^i}}{p^i} \big) = \prod\limits_{p \nmid i,\ i \geq 1} \big( 1-\pi^i\big)^{-\mu(i)/i} \in 1+ \pi + \pi^2 \ZZ_p[\![ \pi ]\!].
	\end{equation}
	Setting $T = E(\pi) -1$ defines an isomorphism $\ZZ_p\llbracket \pi \rrbracket \cong \ZZ_p\llbracket T\rrbracket$.
\end{notation}

\begin{notation}
	For our given polynomial $f(x) = \sum\limits_{i=0}^d a_i x^i \in \ZZ_q[x]$, we put
	\begin{equation}
	\label{E:Ef(x)}
	E_f(x) := \prod\limits_{i=0}^d E(a_i \pi x^i) \in \ZZ_q[\![\pi]\!] [\![ x ]\!].\end{equation}
\end{notation}
We follow the notation of \cite{liu-liu}. Set $$C_u:=\{v\in \ZZ_{\geq 0}\;\big|\; v\equiv u\pmod{q-1}\}$$ and $$\textbf{B}_u:=\Big\{\sum_{v\in C_u} b_vT^{\frac{v}{d(q-1)}}x^{\frac{v}{(q-1)}}\;\big|\; b_v\in \ZZ_q[\![T^{\frac{v}{d(q-1)}}]\!]\ \textrm{and}\ v_T(b_v)\to \infty \Big\}.$$

\begin{notation}
	\hfill
	\begin{itemize}
	\item[(a)] For two integers $n$ and $m$, we denote by $n\%m$ the residue class of $n$ modulo $m$ in $\{0,1,\dots,m-1\}$. 
\item[(b)] Recall $u\in \{0,1,\dots,q-2\}$. We write $b_u|a$ for the minimal positive integer such that $u^{p^{b_u}}\equiv u \pmod q$. 
\item[(c)] Denote $$u_i:=(u^{p^i})\% (q-1)\ \textrm{for}\ i=0,\dots, b_u-1$$ and put \[\widetilde{\textbf{B}}_u=\bigoplus_{i=0}^{b_u-1} \textbf{B}_{u_i}\] to be the total Banach space associated to $u$.
\item[(d)] Choose a permutation $(i_1,i_2,\dots,i_{b_u})$ of $\{1,2,\dots,b_u\}$ such that the sequence $\{u_{i_n}\}$ is non-decreasing. Put   $$\biguplus\limits_{i=0}^{b_{u}-1} C_{u_i}:=\big(c_{u,n}\big)_{n\in \ZZ_{\geq 0}}$$ to be a non-decreasing sequence. 	
\end{itemize}

It is easy to check that 
	\begin{equation}\label{c}
	c_{u,n}=(q-1)\left\lfloor\frac{n}{b_u}\right\rfloor+u_{i_{(n\%b_u)}}.
	\end{equation}
\end{notation}
Let $\psi_p$ denote the operator on $\widetilde{\textbf{B}}_u$ given by 
$$\psi_p\Big(\sum_{n\geq 0}^\infty d_n(T) x^n\Big): = \sum_{n\geq 0}^\infty d_{pn}( T) x^n,$$ and let $\psi$  be the composite linear operator 
\begin{equation}
\label{E:psi}
\psi := \sigma\circ\psi_p \circ E_f(x): \widetilde{\textbf{B}}_u \longrightarrow \widetilde{\textbf{B}}_u,
\end{equation}
where $\sigma$ is the Frobenius automorphism of $\ZZ_q$, and $E_f(x)$ acts on $g\in \widetilde{\textbf{B}}_u$ by 
$$E_f(x)(g):=E_f(x) \cdot g. $$

By Dwork's trace formula, we have 
\begin{lemma}
	The characteristic power series $C_f(\omega^u,T,s)$ satisfies 
	\begin{equation}
	C_f(\omega^u,T,s)=\det\Big(1-\psi^as\;\big|\;\textbf{B}_u/\ZZ_q[\![T^{\frac{1}{d(q-1)}}]\!]\Big).
	\end{equation}
\end{lemma}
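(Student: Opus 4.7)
The plan is to derive the stated identity from the $T$-adic Dwork trace formula for twisted exponential sums, developed by Liu--Wan in \cite{liu-wan} and extended in \cite{liu-liu}. The crucial intermediate identity is the trace formula
\[
S_{f,\omega^u,\chi}(\ell,s) \;=\; (q^\ell-1)\cdot\Tr\bigl(\psi^{a\ell}\;\big|\;\textbf{B}_u/\ZZ_q[\![T^{1/(d(q-1))}]\!]\bigr),
\]
and once this is in hand the determinant identity is purely formal.

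First I would factor the $T$-adic character $(1+T)^{\Tr(\hat f(\hat x))}$ using Dwork's splitting lemma, exploiting $E(\pi)=1+T$ and the distributivity of the Artin--Hasse exponential. For $x\in \FF_{q^\ell}^\times$ with Teichm\"uller lift $\hat x$, one writes
\[
(1+T)^{\Tr_{\QQ_{q^\ell}/\QQ_p}(\hat f(\hat x))} \;=\; \prod_{j=0}^{a\ell-1}\sigma^j\bigl(E_f(\hat x)\bigr),
\]
which re-expresses each term in $S_{f,\omega^u,\chi}(\ell,s)$ as an iterated Frobenius twist of $E_f$.

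Second, I would explain how the twist by $\omega^u\circ\Norm$ selects the Banach subspace $\textbf{B}_u$. Summing over $\FF_{q^\ell}^\times$ kills every Fourier-like monomial $\hat x^v$ except those with $v\equiv u\pmod{q-1}$, contributing a factor $q^\ell-1$; this is exactly the condition $v\in C_u$ that carves $\textbf{B}_u$ out of the full Banach space over $\ZZ_q[\![T^{1/(d(q-1))}]\!]$. Combining with the product factorization above and the definition $\psi=\sigma\circ\psi_p\circ E_f(x)$, a standard Monsky-style computation identifies the resulting finite sum with $(q^\ell-1)\Tr(\psi^{a\ell}|\textbf{B}_u)$, where the passage from $\widetilde{\textbf{B}}_u$ to $\textbf{B}_u$ reflects that $\psi^a$ (as opposed to $\psi^{b_u}$) is the iterate preserving each individual $\textbf{B}_{u_i}$.

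Third, I would pass from the trace formula to the determinant formula by formal power-series manipulation. Taking logarithms and using $\sum_{\ell\geq 1}\Tr(A^\ell)s^\ell/\ell=-\log\det(1-As)$ gives
\[
L_f(\omega^u,T,s)\;=\;\frac{\det(1-\psi^a s\mid \textbf{B}_u)}{\det(1-\psi^a qs\mid \textbf{B}_u)},
\]
after which the defining product $C_f(\omega^u,T,s)=\prod_{i\geq 0}L_f(\omega^u,T,q^i s)$ telescopes to $\det(1-\psi^a s\mid \textbf{B}_u)$, provided one knows $\det(1-\psi^a q^N s\mid \textbf{B}_u)\to 1$ in the appropriate topology as $N\to\infty$; this follows from complete continuity of $\psi^a$, which in the $T$-adic setting is established in \cite{liu-wan}.

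The main obstacle is rigorously justifying the Banach-space bookkeeping in the second step: that the $\omega^u$-twist does cut out precisely $\textbf{B}_u$, that $\psi^a$ is completely continuous on $\textbf{B}_u$ with entire Fredholm determinant in $s$, and that the various Frobenius compatibilities between $\textbf{B}_{u_i}$ and $\textbf{B}_{u_{i+1}}$ assemble correctly into a trace on $\widetilde{\textbf{B}}_u$. All of these verifications are carried out in \cite{liu-wan} and \cite{liu-liu} and can be invoked, so the remaining work is essentially to quote the trace formula and execute the telescoping.
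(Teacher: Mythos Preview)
Your proposal is correct and outlines the standard derivation. Note, however, that the paper does not actually give a proof of this lemma: it simply records the identity under the heading ``By Dwork's trace formula, we have'' and moves on, treating it as a known input from \cite{liu-wan} and \cite{liu-liu}. So there is nothing in the paper to compare against beyond a citation; your sketch supplies precisely the argument the paper elects to quote rather than reproduce.
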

\begin{proof}
	See \cite[Theorem~2.1]{liu-liu}.
\end{proof}
By \cite[Lemma~4.2]{liu-liu}, we have
\begin{equation}\label{ll}
C_f(\omega^u,T,s)^{b_u}=\det\Big(1-\psi^as\;\big|\;\widetilde{\textbf{B}}_u/\ZZ_q[\![T^{\frac{1}{d(q-1)}}]\!]\Big).
\end{equation}

We write $$B=\big(T^{\frac{v}{d(q-1)}}x^{\frac{c_{u,n}}{(q-1)}}\big)_{n\in \ZZ_{\geq 0}}$$ for a basis of $\widetilde{\textbf{B}}_u$ over $\ZZ_q[\![T^{\frac{1}{d(q-1)}}]\!]$ and denote by $N$ the standard matrix of $\psi$ associated to the basis $B$.

It is not hard to check that $N$ is an infinite dimensional matrix of the form 
\begin{equation}
\label{E:explicit N}
N=\begin{pmatrix} 
*T^{\frac{(p-1)c_{u,0}}{d(q-1)}}&*T^{\frac{(p-1)c_{u,0}}{d(q-1)}}&\cdots& *T^{\frac{(p-1)c_{u,0}}{d(q-1)}}&\cdots\\
*T^{\frac{(p-1)c_{u,1}}{d(q-1)}}&*T^{\frac{(p-1)c_{u,1}}{d(q-1)}}&\cdots& *T^{\frac{(p-1)c_{u,1}}{d(q-1)}}&\cdots\\
\vdots & \vdots  & \ddots&\vdots&\\
*T^{\frac{(p-1)c_{u,n}}{d(q-1)}}&*T^{\frac{(p-1)c_{u,n}}{d(q-1)}}&\cdots& *T^{\frac{(p-1)c_{u,n}}{d(q-1)}}&\cdots\\
\vdots & \vdots & \ddots  & \vdots  & \ddots  
\end{pmatrix},
\end{equation}
where all $*$ in this matrix are from $\ZZ_q[\![T^{\frac{1}{d(q-1)}}]\!]$.

Running an analogous argument to \cite[Corollary~3.9]{ren-wan-xiao-yu}, we obtain 
\begin{equation}\label{dwork}
\det\Big(1-\psi^as\;\big|\;\widetilde{\textbf{B}}_u/\ZZ_q[\![T^{\frac{1}{d(q-1)}}]\!]\Big)= \det\big(I-s \sigma^{a-1}(N) \cdots \sigma(N) N \big).
\end{equation}

\begin{notation}
	For a matrix $M$, we write \[\left[ \begin{array}{cccccccccc}
	m_1 & m_2 &\cdots&m_{k} \\
	n_1 & n_2 &\cdots&n_{k}  \end{array} \right]_M\]
	for the $k\times k$-submatrix formed by elements whose row indices belong to $\{m_1,\dots,m_{k}\}$ and whose column indices belong to $\{n_1,\dots,n_{k}\}$.
\end{notation}

 \begin{lemma}\label{lower bound}
 	Let $(t_{ij})_{j\in \ZZ_{\geq 0}}$ be $n$ non-decreasing sequences, and let $M_1,M_2,\dots,M_n$ be $n$ nuclear matrices such that $$M_i=\Diag(T^{t_{i1}},T^{t_{i2}},\dots)\cdot M_i'\quad \textrm{for any}\ 1\leq i\leq n,$$
 	where $M_i'$ are infinite matrix whose entries belong to $\ZZ_q[\![T^{\frac{1}{d(q-1)}}]\!]$.
	 Then the $T$-adic Newton polygon $$\NP_T\big(\det(I-M_{n} \cdots M_2 M_1s)\big)\geq \NP\Big(\Big\{(k,\sum_{i=1}^n\sum\limits_{j=1}^{k}(t_{ij}))\;|\; k\geq 0\Big\}\Big).$$
 \end{lemma}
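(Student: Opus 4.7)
The plan is to expand the Fredholm determinant as
\[
\det(1 - M_n \cdots M_2 M_1 s) = \sum_{k \geq 0} (-1)^k c_k\, s^k,
\]
where $c_k$ is the sum over all size-$k$ subsets $I \subset \ZZ_{\geq 0}$ of the principal $k \times k$ minor of $M := M_n \cdots M_1$ indexed by $I$. To establish the stated lower bound it suffices to show
\[
v_T(c_k) \geq \sum_{i=1}^n \sum_{j=1}^k t_{ij},
\]
for every $k$, since this inequality at every integer abscissa forces $\NP_T(\det(1 - Ms))$ to lie above the lower convex hull of those points.

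The main step is the generalized Cauchy--Binet formula, applied iteratively to $M = M_n \cdots M_1$. For any size-$k$ index set $I$,
\[
\det(M_{I, I}) = \sum_{K_1, \ldots, K_{n-1}} \det\bigl((M_n)_{I, K_{n-1}}\bigr) \det\bigl((M_{n-1})_{K_{n-1}, K_{n-2}}\bigr) \cdots \det\bigl((M_1)_{K_1, I}\bigr),
\]
where the sum runs over all $(n-1)$-tuples of size-$k$ index sets $K_1, \ldots, K_{n-1} \subset \ZZ_{\geq 0}$. Setting $K_0 = K_n = I$, each summand is a product $\prod_{i=1}^n \det((M_i)_{K_i, K_{i-1}})$.

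Next I would exploit the factorization $M_i = \Diag(T^{t_{i1}}, T^{t_{i2}}, \ldots) \cdot M_i'$. For any size-$k$ row set $R$ and column set $C$,
\[
\det\bigl((M_i)_{R, C}\bigr) = \Bigl(\prod_{r \in R} T^{t_{ir}}\Bigr)\, \det\bigl((M_i')_{R, C}\bigr),
\]
and since the entries of $M_i'$ lie in $\ZZ_q[\![T^{1/d(q-1)}]\!]$ the last determinant has non-negative $T$-adic valuation. Hence $v_T(\det((M_i)_{R,C})) \geq \sum_{r \in R} t_{ir}$. Because the sequence $(t_{ij})_{j \geq 0}$ is non-decreasing, the minimum of $\sum_{r \in R} t_{ir}$ over all size-$k$ subsets $R$ is achieved at $R = \{1, 2, \ldots, k\}$, giving $\sum_{r \in R} t_{ir} \geq \sum_{j=1}^k t_{ij}$.

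Combining these estimates, every summand in the Cauchy--Binet expansion of $\det(M_{I,I})$ has $T$-adic valuation at least $\sum_{i=1}^n \sum_{j=1}^k t_{ij}$, and the same bound holds (by the ultrametric inequality) for $\det(M_{I,I})$ itself, and hence for $c_k = \sum_I \det(M_{I,I})$. The only technical point is that the sums above are infinite, but since the bound on each term is independent of $I$ (and the $t_{ij}$ grow so that convergence is automatic in the nuclear setting), the $T$-adic convergence poses no obstacle. The main conceptual ingredient is simply the observation that a principal minor cannot do better than the minor formed by the $k$ smallest diagonal factors from each $M_i$.
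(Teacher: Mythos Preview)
Your proposal is correct and follows essentially the same route as the paper: expand the Fredholm determinant into principal minors, apply the iterated Cauchy--Binet formula to factor each principal minor of $M_n\cdots M_1$ as a sum of products of minors of the $M_i$, and then use the diagonal factorization together with the non-decreasing hypothesis to bound each factor's $T$-adic valuation from below by $\sum_{j=1}^k t_{ij}$. The paper's proof is slightly terser (it jumps straight to the bound $v_T(\det[\cdots]_{M_i})\geq \sum_{j=1}^k t_{ij}$ without spelling out the minimization over row sets), but the argument is the same.
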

\begin{proof}
	
Put $$\det(I-sM_{n} \cdots M_2 M_1):=\sum\limits_{k=0}^\infty (-1)^k \mathbbm r_k(T) s^k .$$
		From the definition of characteristic power series, we get 
		\begin{equation}
		\label{E:expression of char power series}
		\begin{split}
		\mathbbm r_k(T)& =\sum\limits_{0\leq m_1<m_2<\cdots<m_{k}<\infty}\det \left[ \begin{array}{cccccccccc}
		m_1 & m_2 &\cdots&m_{k} \\
		m_1 & m_2 &\cdots&m_{k}  \end{array} \right]_{M_{n} \cdots M_2 M_1}     \\
		&=\sum_{\substack{0\leq m_{1,1}<m_{1,2}<\cdots<m_{1,k}<\infty \\ \cdots\\0\leq m_{n,1}<m_{n,2}<\cdots<m_{n,k}<\infty}} \det\bigg(\prod\limits_{i=1}^{n}\left[ \begin{array}{cccccccccc}
		m_{i+1,1} & m_{i+1,2} &\cdots&m_{i+1,k} \\
		m_{i,1} & m_{i,2} &\cdots&m_{i,k}  \end{array} \right]_{M_i}\bigg)            \\
		&=\sum_{\substack{0\leq m_{1,1}<m_{1,2}<\cdots<m_{1,k}<\infty \\ \cdots\\0\leq m_{n,1}<m_{n,2}<\cdots<m_{n,k}<\infty}} \prod\limits_{i=1}^{n}\bigg(
		\det \left[ \begin{array}{cccccccccc}
		m_{i+1,1} & m_{i+1,2} &\cdots&m_{i+1,k} \\
		m_{i,1} & m_{i,2} &\cdots&m_{i,k} \end{array} \right]_{M_i}\bigg).
		\end{split}
		\end{equation}
		Here and after, we set $m_{n+1,i}=m_{1,i}$ for all $1\leq i\leq k$.
		Since $$v_T\bigg(\det \left[ \begin{array}{cccccccccc}
			m_{i+1,1} & m_{i+1,2} &\cdots&m_{i+1,k} \\
			m_{i,1} & m_{i,2} &\cdots&m_{i,k} \end{array} \right]_{M_i}\bigg)\geq 
			\sum_{j=1}^{k}t_{ij},$$
			we complete the proof. 
\end{proof}

\begin{definition}\label{definition of HP}
	The \emph{Hodge polygon} of $C_f(\omega^u,T,s)$, denoted by $\HP(d,\omega^u,T)$, is the lower convex hull of set $$\Big\{\Big(k, \frac{a(p-1)}{db_u(q-1)}\sum\limits_{j=0}^{kb_u-1}c_{u,j}\Big)\;\big|\; k\geq 0\Big\}.$$
\end{definition}
\begin{lemma}
	Each point in $\Big\{(k, \frac{a(p-1)}{db_u(q-1)}\sum\limits_{j=0}^{kb_u-1}c_{u,j})\Big\}$ is a vertex of $\HP(d,\omega^u,T).$
\end{lemma}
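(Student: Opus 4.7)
The plan is to exhibit that the piecewise linear curve through the candidate points $\bigl(k,\,\tfrac{a(p-1)}{db_u(q-1)}\sum_{j=0}^{kb_u-1}c_{u,j}\bigr)$ is already strictly convex, i.e.\ the slope of the segment from the $k$-th to the $(k+1)$-th candidate point is strictly increasing in $k$. Since the Hodge polygon $\HP(d,\omega^u,T)$ is by definition the lower convex hull of this set, strict convexity at every index $k$ forces each candidate point to be a vertex; no candidate point can lie above a segment joining two others.

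To carry this out, I would first compute the slope $\mu_k$ of the segment between consecutive candidate points. By the definition of $\HP(d,\omega^u,T)$,
\[
\mu_k \;=\; \frac{a(p-1)}{db_u(q-1)}\sum_{j=kb_u}^{(k+1)b_u-1} c_{u,j}.
\]
Using the explicit formula (\ref{c}), for $j$ in the range $[kb_u,(k+1)b_u-1]$ we have $\lfloor j/b_u\rfloor = k$, while $j\%b_u$ runs bijectively through $\{0,1,\dots,b_u-1\}$. Hence
\[
\sum_{j=kb_u}^{(k+1)b_u-1} c_{u,j} \;=\; b_u(q-1)\,k \;+\; \Sigma_u, \qquad \Sigma_u := \sum_{r=0}^{b_u-1} u_{i_r},
\]
where $\Sigma_u$ is a constant depending only on $u$.

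Substituting back gives
\[
\mu_k \;=\; \frac{a(p-1)\,k}{d} \;+\; \frac{a(p-1)\,\Sigma_u}{db_u(q-1)}.
\]
The $k$-dependent part $\frac{a(p-1)}{d}\cdot k$ has strictly positive increment $\frac{a(p-1)}{d}>0$ (since $a\geq 1$, $p\geq 2$ and $d\geq 1$), so $\mu_{k+1}>\mu_k$ for every $k\geq 0$. This strict convexity implies that each candidate point is a vertex of its own lower convex hull, which is exactly the assertion of the lemma.

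There is essentially no obstacle here: the only substantive ingredient is the formula (\ref{c}) for $c_{u,n}$ plus the observation that summing $c_{u,n}$ over a block of length $b_u$ produces an arithmetic progression in $k$. The proof is a short, direct computation with no need to invoke the deeper trace-formula machinery developed in the surrounding sections.
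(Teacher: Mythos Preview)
Your proposal is correct and follows essentially the same approach as the paper: the paper's one-line proof simply asserts that the sequence of block-slopes $\frac{a(p-1)}{db_u(q-1)}\sum_{j=(k-1)b_u}^{kb_u-1}c_{u,j}$ is strictly increasing in $k$, and you have filled in that assertion by explicitly computing $\mu_k=\frac{a(p-1)}{d}\,k+\text{const}$ from formula~\eqref{c}. The only cosmetic point is that the permutation indices in the paper run $i_1,\dots,i_{b_u}$ rather than $i_0,\dots,i_{b_u-1}$, but since you are summing over the full block this is immaterial.
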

\begin{proof}
	It follows that sequence $\Big(\frac{a(p-1)}{db_u(q-1)}\sum\limits_{j=(k-1)b_u}^{kb_u-1}c_{u,j}\Big)_{k\in \ZZ_{\geq 0}}$ is strictly increasing in $k$.
\end{proof}

Recall $$u=\sum\limits_{j=0}^{a-1}u(j)p^j\quad \textrm{and}\quad y_u(k)=\frac{ak(k-1)(p-1)}{2d}+\frac{k\sum\limits_{j=0}^{a-1}{u(j)}}{d}.$$
\begin{lemma}\label{computation}
	 We have $$y_u(k)=\frac{a(p-1)}{db_u(q-1)}\sum\limits_{j=1}^{kb_u}c_{u,j}.$$
\end{lemma}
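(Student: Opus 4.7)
The plan is to reduce the identity to a combinatorial fact about the base-$p$ digits of $u$. Using the explicit formula (\ref{c}) for $c_{u,n}$, I would split the sum into a geometric part and a ``residue'' part; since $\{u_{i_1},\dots,u_{i_{b_u}}\}=\{u_0,\dots,u_{b_u-1}\}$ as multisets, the residue contribution summed over one block of $b_u$ consecutive indices equals $\sum_{i=0}^{b_u-1}u_i$, while the $(q-1)\lfloor\cdot/b_u\rfloor$ part telescopes to $b_u(q-1)\cdot\frac{k(k-1)}{2}$ over $kb_u$ consecutive indices. After multiplying by $\frac{a(p-1)}{db_u(q-1)}$, the geometric contribution matches the quadratic piece $\frac{ak(k-1)(p-1)}{2d}$ of $y_u(k)$ exactly, so the lemma reduces to the single identity
\[
\sum_{i=0}^{b_u-1}u_i \;=\; \frac{b_u(q-1)}{a(p-1)}\sum_{j=0}^{a-1}u(j).
\]

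To handle this, I would expand $u_i=(up^i)\%(q-1)$ using the base-$p$ expansion of $u$ together with $p^a\equiv 1\pmod{q-1}$. Substituting and reindexing by $k=(j+i)\%a$ gives
\[
up^i \;\equiv\; \sum_{k=0}^{a-1}u\big((k-i)\%a\big)\,p^k \pmod{q-1}.
\]
Because $u\le q-2$, the right-hand side is strictly less than $q-1$, so the congruence is an equality and one reads off $u_i=\sum_{k=0}^{a-1}u((k-i)\%a)\,p^k$ as an honest base-$p$ expansion.

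The crucial structural input is that the sequence $(u(j))_{j\in\ZZ/a\ZZ}$ has period exactly $b_u$. This is forced by $up^{b_u}\equiv u\pmod{q-1}$: comparing the two base-$p$ expansions of $u_{b_u}=u$ obtained in the previous step yields $u(k)=u((k-b_u)\%a)$ for every $k$, and any strictly smaller period would contradict the minimality of $b_u$. Given this periodicity, for each fixed $k$ the $b_u$ values $u((k-i)\%a)$ for $0\le i\le b_u-1$ sweep out one full period and hence sum to $\sum_{j=0}^{b_u-1}u(j)=\frac{b_u}{a}\sum_{j=0}^{a-1}u(j)$. Summing over $k$ and using $\sum_{k=0}^{a-1}p^k=(q-1)/(p-1)$ collapses the double sum to the desired identity. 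The period-$b_u$ claim is the only genuinely clever step; the rest is bookkeeping with geometric series and uniqueness of base-$p$ representations.
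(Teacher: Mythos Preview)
Your proposal is correct and follows essentially the same structure as the paper: both split $\sum_{j} c_{u,j}$ into $k$ blocks of length $b_u$ via \eqref{c}, obtaining the quadratic term $\frac{k(k-1)b_u(q-1)}{2}$ plus $k\sum_{i=0}^{b_u-1}u_i$, and then reduce to the identity $\sum_{i=0}^{b_u-1}u_i=\frac{b_u(q-1)}{a(p-1)}\sum_{j}u(j)$. The only cosmetic difference is in this last step: the paper first uses the $b_u$-periodicity of the sequence $(u_i)_i$ to write $\sum_{i=0}^{b_u-1}u_i=\frac{b_u}{a}\sum_{i=0}^{a-1}u_i$ and then evaluates the full sum over $a$ cyclic shifts (where each digit hits each position once), whereas you work directly with the $b_u$ partial shifts by invoking the $b_u$-periodicity of the digit sequence $(u(j))_j$ instead; these are equivalent formulations of the same periodicity fact.
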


\begin{proof}
	From \eqref{c}, we know
	\begin{multline}\label{sum of c}
	\sum\limits_{j=0}^{kb_u-1}c_{u,j}=\sum\limits_{j=0}^{k-1}\sum\limits_{\ell=0}^{b_u-1}c_{u,jb_u+\ell}=\sum\limits_{j=0}^{k-1}\Big[jb_u(q-1)+\sum_{i=0}^{b_u-1}u_i\Big]\\
	=\frac{k(k-1)b_u(q-1)}{2}+k\sum_{i=0}^{b_u-1}u_i.
	\end{multline}
	Since \begin{equation}\label{sum of u}\begin{split}
	\sum_{i=0}^{b_u-1}u_i=\frac{b_u}{a}(\sum\limits_{i=0}^{a-1}up^i\%(q-1))
	= \frac{b_u}{a}(\sum\limits_{j=0}^{a-1}u(j)\frac{q-1}{p-1}),
	\end{split}\end{equation}
	we know
	\[\begin{split}
	\frac{a(p-1)}{db_u(q-1)}\sum\limits_{j=1}^{kb_u}c_{u,j}=&\frac{a(p-1)}{db_u(q-1)}\Big(\frac{k(k-1)b_u(q-1)}{2}+\frac{k(q-1)b_u\sum\limits_{i=0}^{b_u-1}u_i}{a(p-1)}\Big)\\=&\frac{ak(k-1)(p-1)}{2d}+\frac{k\sum\limits_{j=0}^{a-1}u(j)}{d}=y_u(k).\qedhere
	\end{split}\]
\end{proof}

\begin{corollary}\label{passing some certain points}
The Hodge polygon $\HP(d,\omega^u,T)$ passes through the points $$\Big(k, y_u(k)\Big)\quad \textrm{for any}\ k\geq 0.$$ 
\end{corollary}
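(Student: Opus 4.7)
The plan is to observe that this corollary is essentially a direct consequence of the two preceding lemmas, so there is essentially no new work beyond assembling them. By Definition~\ref{definition of HP}, $\HP(d,\omega^u,T)$ is the lower convex hull of the set
\[
\Big\{\Big(k,\ \tfrac{a(p-1)}{db_u(q-1)}\sum_{j=0}^{kb_u-1}c_{u,j}\Big)\;\big|\; k\geq 0\Big\}.
\]
The preceding lemma (stating that each such point is a vertex, which in turn follows because the block sums $\sum_{j=(k-1)b_u}^{kb_u-1} c_{u,j}$ are strictly increasing in $k$) guarantees that the Hodge polygon passes through every one of these points, not merely that they lie on or above it. It remains only to identify the $y$-coordinate with $y_u(k)$.

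That identification is exactly the content of Lemma~\ref{computation}: using the explicit formula \eqref{c} for $c_{u,n}$ to split the sum in blocks of length $b_u$, one rewrites $\sum_{j=0}^{kb_u-1} c_{u,j}$ as $\tfrac{k(k-1)b_u(q-1)}{2} + k\sum_{i=0}^{b_u-1} u_i$, and then the identity \eqref{sum of u} $\sum_{i=0}^{b_u-1} u_i = \tfrac{b_u}{a(p-1)}(q-1)\sum_{j=0}^{a-1}u(j)$ collapses the whole expression into $y_u(k)$ after multiplying by $\tfrac{a(p-1)}{db_u(q-1)}$. Thus
\[
\tfrac{a(p-1)}{db_u(q-1)}\sum_{j=0}^{kb_u-1}c_{u,j}=y_u(k).
\]

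Combining these two inputs, I conclude that for each $k\geq 0$ the point $(k,y_u(k))$ is in fact a vertex of $\HP(d,\omega^u,T)$, which in particular proves the weaker claim that the Hodge polygon passes through it. There is no genuine obstacle here; the only thing that requires any care is keeping straight the indexing conventions (whether the block sum runs from $0$ to $kb_u-1$ or from $1$ to $kb_u$), but this merely amounts to matching the form in which Lemma~\ref{computation} has been phrased against the form used in Definition~\ref{definition of HP}, which differ only by a harmless shift of summation index and not in value.
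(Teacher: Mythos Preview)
Your proposal is correct and matches the paper's own treatment: the paper states this corollary without proof, since it is an immediate combination of the preceding lemma (each defining point is a vertex of $\HP(d,\omega^u,T)$) and Lemma~\ref{computation} (identifying the $y$-coordinate with $y_u(k)$). Your observation about the indexing discrepancy is also apt---the statement of Lemma~\ref{computation} writes $\sum_{j=1}^{kb_u}$ while its proof and Definition~\ref{definition of HP} both use $\sum_{j=0}^{kb_u-1}$, so this is simply a typo in the lemma statement rather than a substantive shift.
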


\begin{proposition}\label{HP}
The polygons $\NP_f(C,\omega^u,T)$ and $\HP(d,\omega^u,T)$ satisfy
	$$\NP_f(C,\omega^u,T)\geq \HP(d,\omega^u,T).$$
\end{proposition}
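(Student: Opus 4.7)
The plan is to combine the $T$-adic Dwork trace formula \eqref{dwork} with the row-valuation structure of the matrix $N$ in \eqref{E:explicit N}, and then descend from $C_f(\omega^u,T,s)^{b_u}$ to $C_f(\omega^u,T,s)$ itself. First I would use \eqref{ll} and \eqref{dwork} to write
\[
C_f(\omega^u,T,s)^{b_u}=\det\!\bigl(I-s\,\sigma^{a-1}(N)\,\sigma^{a-2}(N)\cdots\sigma(N)\,N\bigr).
\]
The key observation is that the Frobenius $\sigma$ acts only on the $\ZZ_q$-coefficients and fixes $T$, so it preserves the $T$-adic valuation of every entry. Consequently each of the $a$ matrices $\sigma^i(N)$ for $0\le i\le a-1$ has exactly the same row valuation pattern as $N$ displayed in \eqref{E:explicit N}: every entry of row $n$ has $T$-adic valuation at least $\tfrac{(p-1)c_{u,n}}{d(q-1)}$.

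Next I would apply Lemma \ref{lower bound} to these $a$ matrices with $t_{ij}=\tfrac{(p-1)c_{u,j-1}}{d(q-1)}$ (the same sequence for every $i$). This yields
\[
\NP_T\bigl(C_f(\omega^u,T,s)^{b_u}\bigr)\ \geq\ \NP\Bigl(\Bigl\{\Bigl(k,\tfrac{a(p-1)}{d(q-1)}\textstyle\sum_{j=0}^{k-1}c_{u,j}\Bigr)\ \Big|\ k\geq 0\Bigr\}\Bigr).
\]
Because the sequence $c_{u,j}$ is non-decreasing, the right-hand side is already convex and equals $\tfrac{a(p-1)}{d(q-1)}\sum_{j=0}^{k-1}c_{u,j}$ at every integer $k$.

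The final step is to descend from the $b_u$-th power back to $C_f(\omega^u,T,s)$. The Newton polygon of $F^{b_u}$ repeats each slope of $\NP_T(F)$ with multiplicity $b_u$, so the identity $\NP_T(F^{b_u})(\ell b_u)=b_u\cdot\NP_T(F)(\ell)$ holds at every integer $\ell\geq 0$. Specializing the previous inequality to $k=\ell b_u$ and recalling Definition \ref{definition of HP} yields
\[
b_u\cdot\NP_f(C,\omega^u,T)(\ell)\ \geq\ \tfrac{a(p-1)}{d(q-1)}\sum_{j=0}^{\ell b_u-1}c_{u,j}\ =\ b_u\cdot\HP(d,\omega^u,T)(\ell),
\]
and dividing by $b_u$ finishes the proof. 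The one delicate point is matching the polygon produced by Lemma \ref{lower bound}, whose vertices sit at every integer $k$, with the Hodge polygon, whose vertices sit at positions $\ell$ corresponding to $k=\ell b_u$ after the $b_u$-th power scaling; evaluating at precisely these indices makes the two bounds agree up to the global factor $b_u$.
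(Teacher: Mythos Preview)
Your proof is correct and follows essentially the same route as the paper: observe that all $\sigma^i(N)$ share the row-valuation pattern of \eqref{E:explicit N}, apply Lemma~\ref{lower bound} to the product, and then invoke \eqref{ll}--\eqref{dwork}. In fact you are more careful than the paper at the final step: the paper writes
\[
\NP_f(C,\omega^u,T)\ \geq\ \NP\Bigl(\bigl\{(k,\tfrac{a(p-1)}{d(q-1)}\textstyle\sum_{j=1}^{k}c_{u,j})\bigr\}\Bigr)\ =\ \HP(d,\omega^u,T),
\]
but the left-hand polygon here is literally $\NP_T\bigl(C_f(\omega^u,T,s)^{b_u}\bigr)$, not $\NP_f(C,\omega^u,T)$, and the displayed equality is not correct when $b_u>1$ since the two polygons differ by the $b_u$-rescaling. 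Your explicit descent argument---evaluating the Lemma~\ref{lower bound} bound at $k=\ell b_u$, using $\NP_T(F^{b_u})(\ell b_u)=b_u\cdot\NP_T(F)(\ell)$, and matching against Definition~\ref{definition of HP}---is exactly what is needed to make this step rigorous, and it lands on the same conclusion.
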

\begin{proof}

Since the matrix $N$ is nuclear as in \eqref{E:explicit N}, its conjugates $\sigma^i(N)$ are also nuclear matrices of the same form. Therefore, applying Lemma~\ref{lower bound} to the product of these matrices yields
$$\NP_T\Big(\det\big(I-s \sigma^{a-1}(N) \cdots \sigma(N) N \big)\Big)\geq \NP\Big(\Big\{\big(k, \frac{a(p-1)}{d(q-1)}\sum\limits_{j=1}^kc_{u,j}\big)\;\big|\;k\geq 0\Big\}\Big).$$

	From \eqref{ll} and \eqref{dwork}, we have 
	\[
	\NP_f(C,\omega^u,T)\geq \NP\Big(\Big\{\big(k, \frac{a(p-1)}{d(q-1)}\sum\limits_{j=1}^kc_{u,j}\big)\;\big|\;k\geq 0\Big\}\Big)=\HP(d,\omega^u,T).\qedhere
\]
\end{proof}
\begin{corollary}\label{HP for chi}
	For any character $\chi:\ZZ_p\to \CC_p^\times$ with conductor $p^{m_\chi}$, we have $$\NP_f(C,\omega^u,\chi)\geq \frac{1}{p^{m_\chi-1}(p-1)}\HP(d,\omega^u,T).$$
\end{corollary}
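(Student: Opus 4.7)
The plan is to deduce this from Proposition~\ref{HP} by a straightforward specialization argument, exploiting the fact that the $p$-adic valuation of $\chi(1)-1$ is known explicitly in terms of the conductor of $\chi$.

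First, I would extend the specialization identity \eqref{specialization of L} from $L$-functions to characteristic power series. Since $C_f(\omega^u,T,s) = \prod_{i\geq 0} L_f(\omega^u,T,q^i s)$ and the substitution $T = \chi(1)-1$ commutes with this product, one obtains
\begin{equation*}
C_f(\omega^u,T,s)\big|_{T = \chi(1)-1} \;=\; C_f(\omega^u,\chi,s).
\end{equation*}

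Next, I would compute $v_p(\chi(1)-1)$. Because $\chi$ has conductor $p^{m_\chi}$, the image $\chi(1) \in \CC_p^\times$ is a primitive $p^{m_\chi}$-th root of unity, so
\begin{equation*}
v_p\bigl(\chi(1)-1\bigr) \;=\; \frac{1}{p^{m_\chi - 1}(p-1)}.
\end{equation*}

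Then the key general principle: if $F(T,s) = \sum_k \mathbbm{u}_k(T) s^k$ is any power series with coefficients in $\ZZ_q[\![T^{1/d(q-1)}]\!]$, and $t_0 \in \CC_p$ is any element, then for every $k$,
\begin{equation*}
v_p\bigl(\mathbbm{u}_k(t_0)\bigr) \;\geq\; v_T(\mathbbm{u}_k)\cdot v_p(t_0),
\end{equation*}
since each monomial $T^{v/d(q-1)}$ appearing in $\mathbbm{u}_k$ specializes to something of $p$-adic valuation at least $\tfrac{v}{d(q-1)}\,v_p(t_0)$. Reading this on Newton polygons, the $p$-adic Newton polygon of $F(t_0,s)$ lies above $v_p(t_0)\cdot \NP_T(F)$ pointwise.

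Applying this observation to $F = C_f(\omega^u,T,s)$ with $t_0 = \chi(1)-1$, and then invoking Proposition~\ref{HP}, yields
\begin{equation*}
\NP_f(C,\omega^u,\chi) \;\geq\; v_p\bigl(\chi(1)-1\bigr)\cdot \NP_f(C,\omega^u,T) \;\geq\; \frac{1}{p^{m_\chi-1}(p-1)}\,\HP(d,\omega^u,T),
\end{equation*}
which is the desired inequality. There is no real obstacle here; the only points requiring care are the compatibility of $C_f$ with the specialization $T \mapsto \chi(1)-1$ and the standard computation of the valuation of $\chi(1)-1$, both of which are routine.
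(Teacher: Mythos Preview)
Your proposal is correct and follows essentially the same approach as the paper's own proof: the paper simply chains the inequalities $\NP_f(C,\omega^u,\chi) \geq \frac{1}{p^{m_\chi-1}(p-1)}\NP_f(C,\omega^u,T) \geq \frac{1}{p^{m_\chi-1}(p-1)}\HP(d,\omega^u,T)$, leaving the specialization step implicit, whereas you have spelled out in detail the computation of $v_p(\chi(1)-1)$ and the general principle relating $T$-adic and $p$-adic Newton polygons under specialization.
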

\begin{proof}
	It simply follows
	\[
	\NP_f(C,\omega^u,\chi)\geq \frac{1}{p^{m_\chi-1}(p-1)}\NP_f(C,\omega^u,T)\geq\frac{1}{p^{m_\chi-1}(p-1)}\HP(d,\omega^u,T).\qedhere
\]
\end{proof}

\section{Proof of Theorem~\ref{main} and Theorem~\ref{strong}}
In this section, we prove the main theorems.
\begin{proposition}\label{coincide for chi1}
	\hfill
\begin{itemize}
	\item[(a)] The Newton polygon $\NP_f(C,\omega^u,T)$ passes through the points $$\Big(kd,y_u(kd)\Big)\quad \textrm{for all}\ k\geq 0.$$
	\item[(b)] If we write  \begin{equation}\label{ru}
	C_f(\omega^u,T,s)=\sum\limits_{k=0}^\infty r_{u,k}(T)s^k,
	\end{equation} then for any $k\geq 0$ and $0\leq u\leq q-2$, the leading term of $r_{u,kd}$ is of the form $$*T^{y_u(kd)},$$ where $*$ represents a $p$-adic unit. 
\end{itemize}	
\end{proposition}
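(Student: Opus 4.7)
The plan is to combine the Hodge lower bound from Proposition~\ref{HP} with an explicit analysis of the lowest $T$-adic term of the coefficients of $C_f(\omega^u,T,s)^{b_u}$, and then to descend to $C_f(\omega^u,T,s)$ via its $b_u$-th root.

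First I would use the matrix identity $C_f(\omega^u,T,s)^{b_u}=\det(I-sM)$ coming from \eqref{ll} and \eqref{dwork}, with $M=\sigma^{a-1}(N)\cdots\sigma(N)\,N$. Expanding the coefficient $R_K(T)$ of $s^K$ via \eqref{E:expression of char power series} and applying Lemma~\ref{lower bound} to the $a$ nuclear matrices $\sigma^i(N)$ (all of which share the common row-factorization displayed in \eqref{E:explicit N}) gives
\[
v_T\bigl(R_K(T)\bigr)\ \geq\ \frac{a(p-1)}{d(q-1)}\sum_{j=0}^{K-1}c_{u,j};
\]
for $K=kdb_u$ this lower bound equals $b_u\,y_u(kd)$ by Lemma~\ref{computation}.

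The crux of the proof is showing that this bound is attained with a $p$-adic unit leading coefficient. Because the residues $u_i$ are pairwise distinct (by the minimality of $b_u$), the collated sequence $(c_{u,n})_{n\geq 0}$ is strictly increasing, so the only index tuples attaining the minimum $T$-valuation in \eqref{E:expression of char power series} are $\{m_{i,1},\dots,m_{i,K}\}=\{0,1,\dots,K-1\}$ for every $i$. The minimal contribution is therefore $\prod_{i=0}^{a-1}\det\bigl(N_i^{(K)}\bigr)$, where $N_i^{(K)}$ denotes the top-left $K\times K$ principal submatrix of $\sigma^i(N)$. Unwinding $N$ through $E_f(x)=\prod_{i=0}^d E(\hat a_i\pi x^i)$, the monic assumption $\hat a_d=1$ forces the factor $E(\pi x^d)$ to contribute distinguished entries of $p$-adic unit type along a ``shift by $d$'' pattern; these entries assemble into a single column permutation whose mod-$p$ contribution dominates all others in the expansion of $\det\bigl(N_i^{(K)}\bigr)$, producing a $p$-adic unit leading $T$-coefficient. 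Consequently
\[
R_{kdb_u}(T)=*\,T^{b_u y_u(kd)}+(\text{higher $T$-powers}),\qquad *\in\ZZ_q^\times.
\]

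Finally, $\NP_T(C_f^{b_u})$ is obtained from $\NP_T(C_f)$ by repeating each slope $b_u$ times, so a vertex of the former at $(kdb_u,\,b_u y_u(kd))$ translates to a vertex of the latter at $(kd,\,y_u(kd))$, establishing (a). For (b), use the convolution $R_{kdb_u}=\sum_{k_1+\cdots+k_{b_u}=kdb_u}\prod_j r_{u,k_j}$ together with the Hodge bound $v_T(r_{u,k})\geq y_u(k)$ (from Proposition~\ref{HP} and Corollary~\ref{passing some certain points}); strict convexity of $k\mapsto y_u(k)$ forces the unique minimum-valuation summand to be $\prod_{j=1}^{b_u}r_{u,kd}$, so matching leading $T$-coefficients yields the leading term of $r_{u,kd}(T)$ as $\beta\,T^{y_u(kd)}$ with $\beta^{b_u}=*\in\ZZ_q^\times$, hence $\beta\in\ZZ_q^\times$. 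The main obstacle is the middle step: verifying that the top-left minor of $N$ actually produces a $p$-adic unit leading $T$-coefficient. This requires carefully tracking the monomial expansion of $E_f(x)$ and using $\hat a_d=1$ to isolate the single permutation in the determinant whose contribution survives modulo $p$.
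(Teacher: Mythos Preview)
Your approach is genuinely different from the paper's, and the gap you flag as ``the main obstacle'' is exactly the step the paper avoids.

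The paper does \emph{no} direct analysis of the minors of $N$. Instead it imports the result from \cite{Davis-wan-xiao} for the untwisted polynomial $g(x)=f(x^{q-1})$ and uses the factorization of Lemma~\ref{core lemma},
\[
C_g(\omega^0,T,s)=\prod_{u=0}^{q-2}C_f(\omega^u,T,s).
\]
By \cite{Davis-wan-xiao}, $\NP_g(C,\omega^0,T)$ is already known to hit $\bigl(kd(q-1),\tfrac{ak((q-1)kd-1)(p-1)}{2}\bigr)$ with unit leading coefficient. Summing the individual Hodge bounds $\HP(d,\omega^u,T)$ over $u$ via Lemma~\ref{technical lemma}, one finds that the sum of the lower bounds already equals the known value of $\NP_g$ at these abscissae (this is the explicit identity $\sum_{u=0}^{q-2}y_u(kd)=\tfrac{ak((q-1)kd-1)(p-1)}{2}$, checked by the elementary count $\sum_{u=0}^{q-2}\sum_i u(i)=\tfrac{aq(p-1)}{2}-a(p-1)$). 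The squeeze forces each factor $\NP_f(C,\omega^u,T)$ to touch its own Hodge polygon at $(kd,y_u(kd))$, giving (a); part (b) follows because the product of the leading coefficients equals the known $p$-adic unit from \cite{Davis-wan-xiao}.

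Your route is internally coherent: the descent from $C_f^{b_u}$ to $C_f$ via strict convexity of $y_u$ is correct, and the strict monotonicity of $(c_{u,n})$ does isolate the principal minor. But the ``shift by $d$'' permutation argument you sketch would have to be carried out on the twisted basis $\{x^{c_{u,n}/(q-1)}\}$, where $\psi$ permutes the blocks $\textbf{B}_{u_i}$ cyclically; tracking which monomials of $E_f(x)$ land in which block and verifying that a single permutation dominates mod $p$ is essentially re-proving the core computation of \cite{Davis-wan-xiao} in this more complicated setting. That is doable, but it is real work that your proposal does not carry out. The paper's factorization trick buys exactly this: it reduces the twisted unit-leading-coefficient statement to the untwisted one already in the literature, at the cost of the single combinatorial identity above.
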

\begin{notation}\label{def of UP}
We denote by $\UP(d,\omega^u,T)$ the lower convex hull of the points in $$\Big\{(kd,y_u(kd))\;\big|\;k\geq 0\Big\}.$$
\end{notation}
\begin{corollary}\label{upper T}
	The polygon $\UP(d,\omega^u,T)$ forms an upper bound of $\NP_f(C,\omega^u,T)$.
\end{corollary}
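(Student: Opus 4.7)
The plan is to deduce Corollary~\ref{upper T} directly from Proposition~\ref{coincide for chi1}(a), together with the convexity of both polygons involved. No substantial new computation is needed: the point is that $\UP(d,\omega^u,T)$ is the piecewise-linear interpolation of the points $(kd, y_u(kd))$, and that the Newton polygon, being lower convex and passing through these same points, must lie weakly below each chord joining consecutive ones.

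First I would verify that every point $(kd, y_u(kd))$ is actually a vertex of $\UP(d,\omega^u,T)$, and not merely a point lying above the lower convex hull. Since
\[
y_u(k) \;=\; \frac{ak(k-1)(p-1)}{2d}+\frac{k}{d}\sum_{i=0}^{a-1}u(i)
\]
is quadratic in $k$ with positive leading coefficient $\tfrac{a(p-1)}{2d}$, the function $k\mapsto y_u(kd)$ is strictly convex. Hence its second differences are strictly positive, every $(kd, y_u(kd))$ is a vertex of the lower convex hull, and $\UP(d,\omega^u,T)$ is exactly the piecewise-linear polygon joining $(kd, y_u(kd))$ to $((k{+}1)d, y_u((k{+}1)d))$ for all $k\geq 0$ by straight segments.

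Next I would invoke Proposition~\ref{coincide for chi1}(a), which states that $\NP_f(C,\omega^u,T)$ also passes through each point $(kd, y_u(kd))$. On any interval $[kd,(k{+}1)d]$, the restriction of $\NP_f(C,\omega^u,T)$ is a lower convex polygon with the same two endpoints as the segment of $\UP(d,\omega^u,T)$ on that interval. Because a lower convex polygon always lies on or below the chord joining its endpoints, this gives $\NP_f(C,\omega^u,T)(x) \leq \UP(d,\omega^u,T)(x)$ for every integer $x$, which is exactly the desired inequality $\UP(d,\omega^u,T) \geq \NP_f(C,\omega^u,T)$.

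There is essentially no obstacle here: once Proposition~\ref{coincide for chi1}(a) is available, the corollary is a purely convex-geometric consequence of the strict convexity of $k\mapsto y_u(kd)$ combined with lower convexity of Newton polygons. The only item worth recording explicitly is that strict convexity, which is an elementary check on the quadratic expression for $y_u$.
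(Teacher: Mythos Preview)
Your proposal is correct and follows the same approach as the paper, which simply states that the corollary ``follows directly from Proposition~\ref{coincide for chi1}~(a).'' You have merely made explicit the convex-geometric step that the paper leaves tacit: strict convexity of $k\mapsto y_u(kd)$ ensures each $(kd,y_u(kd))$ is a vertex of $\UP(d,\omega^u,T)$, and lower convexity of $\NP_f(C,\omega^u,T)$ forces it to lie weakly below the chords.
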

\begin{proof}
	This follows directly from Proposition~\ref{coincide for chi1} (a).
\end{proof}
\begin{corollary}\label{upper chi}
	Any finite character $\chi:\ZZ_p\to \CC_p^\times$ with conductor $p^{m_\chi}$ satisfies \begin{equation}
	\NP_f(C,\omega^u,\chi)\leq\frac{1}{(p-1)p^{m_\chi-1}}\UP(d,\omega^u,T).
	\end{equation}
\end{corollary}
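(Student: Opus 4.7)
The plan is to reduce the inequality to a pointwise check at the abscissae $x=kd$ that define $\UP(d,\omega^u,T)$, and then to obtain that pointwise bound by specializing the leading $T$-adic term of $r_{u,kd}(T)$ identified in Proposition~\ref{coincide for chi1}(b).

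First, unpacking definitions, $\frac{1}{(p-1)p^{m_\chi-1}}\UP(d,\omega^u,T)$ is the lower convex hull of the points $\bigl(kd,\frac{y_u(kd)}{(p-1)p^{m_\chi-1}}\bigr)$ for $k\geq 0$. A convex polygon lying pointwise on or below each of these defining points automatically lies on or below their lower convex hull, because the lower convex hull is the maximum convex function with this property. Hence it suffices to prove
\[
\NP_f(C,\omega^u,\chi)(kd)\leq \frac{y_u(kd)}{(p-1)p^{m_\chi-1}}\qquad \text{for every } k\geq 0.
\]
By \eqref{specialization of L} together with the expansion \eqref{ru}, this reduces in turn to the single estimate $v_p\bigl(r_{u,kd}(\chi(1)-1)\bigr)\leq \frac{y_u(kd)}{(p-1)p^{m_\chi-1}}$ on coefficients.

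For the pointwise estimate I combine two ingredients. A standard computation shows that $v_p(\chi(1)-1)=\frac{1}{(p-1)p^{m_\chi-1}}$ for a finite character of conductor $p^{m_\chi}$. Proposition~\ref{coincide for chi1}(b) asserts that the leading (i.e.\ minimum $T$-valuation) term of $r_{u,kd}(T)\in \ZZ_q[\![T^{\frac{1}{d(q-1)}}]\!]$ is $c_k\, T^{y_u(kd)}$ with $c_k$ a $p$-adic unit. Specializing $T\mapsto \chi(1)-1$, this leading term contributes $p$-adic valuation exactly $\frac{y_u(kd)}{(p-1)p^{m_\chi-1}}$, while every other fractional $T$-monomial has strictly larger $T$-valuation and --- since its coefficient lies in $\ZZ_q$ hence has nonnegative $p$-adic valuation --- strictly larger $p$-adic valuation after specialization. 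By the ultrametric inequality, the valuation of the sum equals that of the leading term, yielding equality
\[
v_p\bigl(r_{u,kd}(\chi(1)-1)\bigr)=\frac{y_u(kd)}{(p-1)p^{m_\chi-1}},
\]
which is strictly stronger than the required upper bound.

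The only delicate moment is confirming that ``higher order in $T$'' really does mean strictly greater $T$-valuation throughout the full fractional expansion, so that no hidden cancellation can lower the $p$-adic valuation under specialization; this is immediate from Proposition~\ref{coincide for chi1}(b) itself, which records both the exact $T$-valuation $y_u(kd)$ and the unit nature of the leading coefficient. I therefore do not anticipate any serious obstacle here --- all of the real work has been done in proving Proposition~\ref{coincide for chi1}(b), and the present corollary is essentially a formal specialization argument riding on that input.
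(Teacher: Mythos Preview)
Your argument is correct and is precisely the specialization argument the paper is pointing to when it says ``It follows from [Proposition]~\ref{coincide for chi1}~(b)'': the unit leading coefficient of $r_{u,kd}(T)$ forces $v_p\bigl(r_{u,kd}(\chi(1)-1)\bigr)=\tfrac{y_u(kd)}{(p-1)p^{m_\chi-1}}$, and convexity of the Newton polygon then gives the bound against $\UP$. You have simply unpacked what the paper leaves implicit, including the observation (used again in the proof of Theorem~\ref{main}(a)) that this actually yields equality at the vertices $x=kd$.
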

\begin{proof}
	It follows from Theorem~\ref{coincide for chi1}~(b).
\end{proof}
We will give the proof of Proposition~\ref{coincide for chi1} later.

\begin{lemma}\label{technical lemma}
	Let $\NP_1, \NP_2,\dots,\NP_n$ be $n$ Newton polygons. Assume for each $1\leq i\leq n$ there is a rational number $c$ and a vertex $(k_i, y_i)$ of $\NP_i$ such that all segments of $\NP_i$ before this point have slopes strictly less than $c$, while all segments after that point have slopes greater than $c$. Then $\bigoplus\limits_{i=1}^n \NP_i$ passes though the point $$\Big(\sum_{i=1}^{n}k_i, \sum_{i=1}^{n}y_i\Big).$$
	
\end{lemma}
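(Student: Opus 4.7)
The plan is a direct bookkeeping argument based on the definition of $\oplus$ (Notation~\ref{properties for NP}(b)): the multiset of slopes of $\bigoplus_i \NP_i$ is the disjoint union of the multisets of slopes of the individual $\NP_i$, and the resulting Newton polygon is recovered via $R^{-1}$ (Definition~\ref{definition of R}) by sorting this union in nondecreasing order starting from the common left endpoint.

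First I would sort the slopes of each $\NP_i$ as $s_{i,1} \leq s_{i,2} \leq \cdots$. The hypothesis that $(k_i, y_i)$ is a vertex of $\NP_i$ with all preceding segments of slope strictly less than $c$ and all following segments of slope greater than $c$ translates exactly to $s_{i,j} < c$ for $j \leq k_i$ and $s_{i,j} > c$ for $j > k_i$. Under the normalization $\NP_i(0) = 0$ (which is implicit in the use of $R^{-1}$, and which holds in all applications of this lemma in the paper since the relevant power series have constant term $1$), this forces $y_i = \NP_i(k_i) = \sum_{j=1}^{k_i} s_{i,j}$.

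Next I would pass to the combined polygon. Its slope multiset $\biguplus_i \{s_{i,j}\}_j$ splits cleanly at $c$: there are exactly $\sum_i k_i$ slopes strictly less than $c$ (contributed $k_i$ each by the individual polygons) and the remainder are strictly greater than $c$. After sorting, the initial segment of length $\sum_i k_i$ of $\bigoplus_i \NP_i$ thus consists precisely of the small slopes, so its endpoint has ordinate $\sum_i \sum_{j=1}^{k_i} s_{i,j} = \sum_i y_i$. This shows $\bigoplus_i \NP_i$ passes through $\bigl(\sum_i k_i,\ \sum_i y_i\bigr)$; in fact this point is a vertex of the combined polygon, because all subsequent slopes are strictly larger than all preceding ones. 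I do not anticipate a genuine obstacle: the only care required is the bookkeeping of the common left endpoint and the strict separation of slopes at $c$, both of which are immediate from the hypotheses.
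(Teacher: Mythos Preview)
Your argument is correct and is exactly the approach the paper takes: the paper's proof is the single sentence ``The proof follows from the definition of direct sum `$\oplus$' of polygons,'' and you have simply spelled out that bookkeeping in detail. The only extra care you take---noting the normalization $\NP_i(0)=0$ implicit in $R^{-1}$ and verified in the applications---is appropriate and does not diverge from the paper's reasoning.
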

\begin{proof}
	The proof follows from the definition of direct sum ``$\oplus$'' of polygons. 
\end{proof}
\begin{lemma}\label{specialization}
	Any finite character $\chi$ with conductor $p^{m_\chi}$ satisfies
	\begin{equation}\label{ineq}
	(p-1)p^{m_\chi}\NP_g(C,\omega^0,\chi)\geq \NP_g(C,\omega^0,T).
	\end{equation}
\end{lemma}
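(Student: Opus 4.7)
The plan is to derive the inequality from a direct specialization argument, with the key numerical input being the $p$-adic valuation of $\chi(1)-1$. Since the finite character $\chi: \ZZ_p \to \CC_p^\times$ has conductor $p^{m_\chi}$, the value $\chi(1)$ is a primitive $p^{m_\chi}$-th root of unity, so that $v_p(\chi(1)-1) = \frac{1}{(p-1)p^{m_\chi-1}}$.

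Write $C_g(\omega^0, T, s) = \sum_{k\geq 0} c_k(T)\, s^k$. By the Dwork-trace-formula setup of Section~3 (applied to $g$ in place of $f$ and with $u = 0$), each coefficient $c_k(T)$ is an integral power series over $\ZZ_q$ in a fractional power of $T$. The specialization identity \eqref{specialization of L}, applied to each factor of the product $C_g(\omega^0, T, s) = \prod_{i\geq 0} L_g(\omega^0, T, q^i s)$, gives $C_g(\omega^0, \chi, s) = \sum_{k\geq 0} c_k(\chi(1)-1)\, s^k$. Because $v_p(\chi(1)-1) > 0$ and the expansion coefficients of $c_k(T)$ lie in $\ZZ_q$, a term-by-term estimate in the power-series expansion yields
\[ v_p\bigl(c_k(\chi(1)-1)\bigr) \;\geq\; v_T(c_k)\cdot v_p(\chi(1)-1) \;=\; \frac{v_T(c_k)}{(p-1)p^{m_\chi-1}}. \]
Taking lower convex hulls over all $k$ converts these pointwise bounds into an inequality of Newton polygons:
\[ \NP_g(C,\omega^0,\chi) \;\geq\; \frac{1}{(p-1)p^{m_\chi-1}}\NP_g(C,\omega^0,T). \]

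Multiplying through by $(p-1)p^{m_\chi-1}$ already gives an inequality sharper than the one stated. To conclude the claimed $(p-1)p^{m_\chi}\NP_g(C,\omega^0,\chi) \geq \NP_g(C,\omega^0,T)$, I would observe that $C_g(\omega^0,\chi,s)$ has constant term $1$, so $\NP_g(C,\omega^0,\chi)$ begins at the origin and has non-negative slopes; hence all its heights are non-negative, and enlarging the scaling factor from $(p-1)p^{m_\chi-1}$ to $(p-1)p^{m_\chi}$ only strengthens the left-hand side. There is no real obstacle to this argument: the whole content reduces to the ramification computation $v_p(\chi(1)-1) = 1/((p-1)p^{m_\chi-1})$ together with the $p$-integrality of the Dwork-trace-formula coefficients, with the passage from pointwise bounds to an inequality of polygons being immediate from the definition of ``$\geq$'' and the monotonicity of lower convex hulls.
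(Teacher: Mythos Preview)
Your proposal is correct and follows the same specialization argument as the paper: reduce to a termwise estimate on monomials using $p$-integrality of the Dwork coefficients together with $v_p(\chi(1)-1)=\frac{1}{(p-1)p^{m_\chi-1}}$, then pass to lower convex hulls. Your treatment is in fact slightly more careful than the paper's, since you explicitly note that the natural bound is $(p-1)p^{m_\chi-1}\NP_g(C,\omega^0,\chi)\geq \NP_g(C,\omega^0,T)$ and that the extra factor of $p$ in the stated inequality is harmless because the polygon has non-negative heights.
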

\begin{proof}
	It is enough to show each monomial $aT^i\in \ZZ_q[T]$ satisfy $$v_p(a(\chi(1)-1)^i)\geq v_T(aT^i),$$ which follows
\begin{multline*}
	(p-1)p^{m_\chi}v_p(a(\chi(1)-1)^i)=(p-1)p^{m_\chi}(v_p(a)+iv_p(\chi(1)-1))\\=
	(p-1)p^{m_\chi}v_p(a)+i\geq v_T(aT^i).\qedhere
\end{multline*}
\end{proof}
\begin{proof}[Proof of Proposition~\ref{coincide for chi1}]

	Proof of (a). Fix a finite character $\chi_1:\ZZ_p\to \CC_p^\times$ with conductor $p$. 
	By Lemma~\ref{specialization}, we have
	\begin{equation}
	 (p-1)\NP_g(C,\omega^0,\chi_1)\geq \NP_g(C,\omega^0,T).
	 \end{equation}
	 
	 By \cite[Proposition~3.2]{Davis-wan-xiao}, the $p$-adic Newton polygon $\NP_g(C,\omega^0,\chi_1)$ passes through the points $$\Big(kd(q-1), \frac{ak\big((q-1)kd-1\big)(p-1)}{2}\Big)\quad \textrm{for any}\ k\geq 0.$$
	 Hence, we know that $\NP_g(C,\omega^0,T)$ is not above point $$\Big(kd(q-1), \frac{ak((q-1)kd-1)(p-1)}{2}\Big)\quad \textrm{for any}\ k\geq 0. $$
	
	On the other hand, by Definition~\ref{definition of HP} and Lemma~\ref{computation}, we have 
	\begin{itemize}
		\item[(1)] For any $0\leq u\leq q-2$ and $k\geq 0$, the point $$\Big(kd,y_u(kd)\Big)$$ is a vertex of $\HP(d,\omega^u,T)$.
		
		\item[(2)] All segments of $\HP(d,\omega^u,T)$ before this point have slopes strictly less than $ak(p-1)$, while all segments after this point have slopes greater than $ak(p-1)$. 
	\end{itemize}
	
	By checking the conditions in Lemma~\ref{technical lemma}, we prove $\bigoplus\limits_{u=0}^{q-2}\HP(d,\omega^u,T)$ passes through the points $$\Big(kd(q-1), \sum\limits_{u=0}^{q-2}y_u(kd)\Big).$$

	Combining it with Proposition~\ref{HP} yields that $\NP_g(C,\omega^0,T)$ is not above the points $$\Big(kd(q-1), \sum\limits_{u=0}^{q-2}y_u(kd)\Big)\quad \textrm{for any}\ k\geq 0.$$
	 Thus, 
	\begin{equation}
	\label{fake inequality}\sum\limits_{u=0}^{q-2}\Big(\frac{ak(kd-1)(p-1)}{2}+k\sum\limits_{i=0}^{a-1}u(i)\Big)=\sum\limits_{u=0}^{q-2}y_u(kd)\leq \frac{ak((q-1)kd-1)(p-1)}{2}.
	\end{equation}
	Now we show that \eqref{fake inequality} is actually an equality. 
	
	Consider \[
	\sum\limits_{u=0}^{q-2}\sum\limits_{i=0}^{a-1}u(i)=-a(p-1)+\sum\limits_{i=0}^{a-1}\sum\limits_{u=0}^{q-1}u(i)=-a(p-1)+\sum\limits_{i=0}^{a-1}q\frac{p-1}{2}
	=\frac{aq(p-1)}{2}-a(p-1). 
\]
	Then we simplify the left-hand side of \eqref{fake inequality} by
	\[\begin{split}
		\sum\limits_{u=0}^{q-2}\Big(\frac{ak(kd-1)(p-1)}{2}+k\sum\limits_{i=0}^{a-1}u(i)\Big)=&(p-1)\Big(\frac{aqk}{2}-ak+\frac{(q-1)ak(kd-1)}{2}\Big)\\
	=&\frac{ak((q-1)kd-1)(p-1)}{2},
	\end{split}\]
	which is equal to its right-hand side.
	It implies for any $u\in \{0,1,\dots,q-2\}$, the Newton polygon $\NP_f(C,\omega^u,T)$ passes through the points  $$\Big(kd,y_u(kd)\Big)\quad \textrm{for any}\ k\geq 0.$$
	
	Proof of (b). From (a), we are able to write \begin{equation}\label{expression of ruk}
	r_{u,kd}(T):=\sum\limits_{i=y_u(kd)}^\infty r_{u,kd,i}T^i,
	\end{equation}
	where $r_{u,kd,i}$ belongs to $\calO_{\CC_p}$.
	
	Put $C_g(\omega^0,T,s)=\sum\limits_{n=0}^{\infty}w_n(T)s^{n}.$
	From \cite{Davis-wan-xiao}, we know that the leading term of $w_{kd}(T)$ has the form $$*_kT^{\frac{ak((q-1)kd-1)(p-1)}{2}},$$ where $*_k$ is a $p$-adic unit.
	It is easy to show that $$\prod\limits_{u=0}^{q-2} r_{u,kd,y(kd)}=*_k ,$$ 
	which implies that $r_{u,kd,i}$ are all $p$-adic units. 
\end{proof}

Now we are ready to prove our main theorems of this paper.
\begin{proof}[Proof of Theorem~\ref{main}]
	(a) From \eqref{specialization of L}, we obtain
	\begin{equation*}
	C_f(\omega^u,T,s)|_{T=\chi(1)-1}=C_f(\omega^u,\chi,s)
	=\sum\limits_{k=0}^\infty r_{u,k}(\chi(1)-1)s^k.
	\end{equation*}
	Therefore, by Proposition~\ref{coincide for chi1} (b), the Newton polygon $(p-1)p^{m_\chi-1}\NP_f(C,\omega^u,\chi)$ is not above point  
	$$\Big(kd,y_u(kd)\Big)\quad \textrm{for all}\ k\geq 0.$$
	
	On the other hand, the Hodge polygon $\HP(d,\omega^u,T)$ forms a lower bound of $(p-1)p^{m_\chi-1}\NP_f(C,\omega^u,\chi)$ and for all $k\geq 0$ the points $$\Big(kd,y_u(kd)\Big)$$ are also vertices of $\HP(d,\omega^u,T)$. 
	
	Therefore, the points $$\Big(kd,y_u(kd)\Big)$$ are forced to be the vertices of $(p-1)p^{m_\chi-1}\NP_f(C,\omega^u,\chi)$. 
	
	A simple argument about the relation between roots of a power series and its $p$-adic Newton polygon completes the proof.
	
	(b) Since the slopes of segments of $\HP(d,\omega^u,T)$ between $x=d(k-1)$ and $x=dk$ are in the interval $$\Big[	a(k-1)(p-1)+\frac{1}{d}\sum\limits_{i=0}^{a-1}u(i), a(k-1)(p-1)+	\frac{a}{d}(d-1)(p-1)+\frac{1}{d}\sum\limits_{i=0}^{a-1}u(i)
	\Big],$$ by simply applying (a), we know that the slopes of segments of $(p-1)p^{m_\chi-1}\NP_f(C,\omega^u,\chi)$ between $x=d(k-1)$ and $x=dk$ also in this interval, which completes the proof of (b).
\end{proof}
Recall $\UP(d,\omega^u,T)$ is the upper bound of $\NP_f(C,\omega^u,T)$ defined in Notation~\ref{def of UP}.
\begin{lemma}\label{distance}
	The vertical distance between points in $\UP(d,\omega^u,T)$ and $\NP_f(C,\omega^u,T)$ is bounded above by $\frac{ad(p-1)}{8}$. 
\end{lemma}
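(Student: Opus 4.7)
The plan is to sandwich $\NP_f(C,\omega^u,T)$ between $\HP(d,\omega^u,T)$ from below (Proposition~\ref{HP}) and $\UP(d,\omega^u,T)$ from above (Corollary~\ref{upper T}), and then note that both bounding polygons agree with $\NP_f(C,\omega^u,T)$ at the common nodes $\bigl(kd, y_u(kd)\bigr)$ for every $k \geq 0$. The vertical gap $\UP(x) - \NP_f(x)$ is therefore controlled by the vertical gap $\UP(x) - \HP(x)$, which can be computed explicitly on each interval $[(k-1)d, kd]$.

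First I would fix an integer $k \geq 1$ and restrict attention to the interval $[(k-1)d, kd]$. On this interval, $\UP$ is the straight line joining $\bigl((k-1)d, y_u((k-1)d)\bigr)$ to $\bigl(kd, y_u(kd)\bigr)$ (these are consecutive vertices of $\UP$ because $y_u$ is a quadratic in $k$ with positive leading coefficient $\tfrac{a(p-1)}{2d}$, so the slopes of successive segments strictly increase). Meanwhile $\HP$, by Corollary~\ref{passing some certain points}, passes through $\bigl(m, y_u(m)\bigr)$ for every integer $m$, and is piecewise linear between these integer nodes. Both $\UP$ and $\HP$ agree at the two endpoints of the interval.

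Next I would compute $\UP(x) - \HP(x)$ at the integer points $x = (k-1)d + j$ with $0 \leq j \leq d$. Since $\HP((k-1)d + j) = y_u((k-1)d + j)$ and $\UP((k-1)d + j)$ is the linear interpolation between $y_u((k-1)d)$ and $y_u(kd)$, this reduces to the general identity that for any quadratic $g(x) = \alpha x^2 + \beta x + \gamma$, the difference between the chord over $[x_0, x_0 + d]$ and $g(x_0 + j)$ equals $\alpha\, j(d-j)$. Applied with $\alpha = \tfrac{a(p-1)}{2d}$ this gives
\[
\UP\bigl((k-1)d + j\bigr) - \HP\bigl((k-1)d + j\bigr) = \frac{a(p-1)}{2d}\, j(d-j),
\]
which is maximized at $j = d/2$ with value $\tfrac{ad(p-1)}{8}$. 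Because both $\UP$ and $\HP$ are linear on each subinterval $[(k-1)d + j,\, (k-1)d + j + 1]$, the difference is linear there as well, so the maximum of $\UP - \HP$ on the whole interval $[(k-1)d, kd]$ is already attained at one of these integer points and is bounded by $\tfrac{ad(p-1)}{8}$.

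Finally I would combine $\NP_f(C,\omega^u,T) \geq \HP(d,\omega^u,T)$ with $\NP_f(C,\omega^u,T) \leq \UP(d,\omega^u,T)$ to conclude
\[
0 \;\leq\; \UP(x) - \NP_f(C,\omega^u,T)(x) \;\leq\; \UP(x) - \HP(x) \;\leq\; \frac{ad(p-1)}{8}
\]
uniformly in $x$, which is the desired estimate. The calculation is essentially routine once the sandwich and the common nodes are set up; the only mildly delicate point is justifying that the maximum of the piecewise-linear difference $\UP - \HP$ occurs at an integer abscissa, which I expect to be the main (minor) obstacle and which follows from linearity of both polygons on each unit subinterval.
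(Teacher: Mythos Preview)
Your proof is correct and follows essentially the same strategy as the paper's: sandwich $\NP_f(C,\omega^u,T)$ between $\HP$ and $\UP$ via Proposition~\ref{HP} and Corollary~\ref{upper T}, then bound $\UP-\HP$ using that both polygons meet at the nodes $(kd,y_u(kd))$ and that $y_u$ is quadratic with leading coefficient $\tfrac{a(p-1)}{2d}$. The only cosmetic difference is that the paper passes through the continuous parabola $P(x)=y_u(x)$ (which lies below $\HP$ by convexity) and computes $\max(\UP-P)$ at the midpoint of each length-$d$ segment, whereas you compute $\UP-\HP$ directly at integer abscissas; both routes give the same bound $\tfrac{a(p-1)}{2d}\cdot(d/2)^2=\tfrac{ad(p-1)}{8}$.
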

\begin{proof}
	By Corollary~\ref{upper T} and Proposition~\ref{HP}, we know $$\UP(d,\omega^u,T)\geq  \NP_f(C,\omega^u,T)\geq \HP(d,\omega^u,T).$$
	By Corollary~\ref{passing some certain points}, the polygon  $\HP(d,\omega^u,T)$ is above the parabola $P$ defined by $$P(x):= \frac{ax(x-1)(p-1)}{2d}+\frac{x\sum\limits_{i=0}^{a-1}u(i)}{d}.$$
	Since all vertices  $(kd,y_u(kd))$ of $\UP(d,\omega^u,T)$ coincide with the parabola $P$, by simple calculation, the maximal vertical distance of $\UP(d,\omega^u,T)$ and $P$ is equal to 
	\[
	\max\limits_{k\in \ZZ_{\geq 0}}\Big\{\frac{P(d(k+1))+P(d(k))}{2}-P(d(k+1/2))\Big\}
	=\max\limits_{k\in \ZZ_{\geq 0}} \Big\{\frac{ad(p-1)}{8}\Big\}=\frac{ad(p-1)}{8}.\qedhere
	\]
\end{proof}
\begin{proposition}\label{independent}
	Let $\chi$ be a finite character with conductor $p^{m_\chi}> \frac{adp}{8}$. Then the Newton polygon $p^{m_\chi}\NP_f(C,\omega^u,\chi)$ is independent of $\chi$. 
\end{proposition}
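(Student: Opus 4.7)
The plan is to combine the tight sandwich between $\HP$ and $\UP$ established earlier with a unique-minimum argument that becomes effective precisely at the conductor bound.

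First I would recall that by Corollaries~\ref{HP for chi} and \ref{upper chi}, the rescaled polygon $(p-1)p^{m_\chi-1}\NP_f(C,\omega^u,\chi)$ is sandwiched between $\HP(d,\omega^u,T)$ and $\UP(d,\omega^u,T)$, both of which share the vertices $(kd, y_u(kd))$ for $k \geq 0$. By Proposition~\ref{coincide for chi1}(b), the coefficient of $T^{y_u(kd)}$ in $r_{u,kd}(T)$ is a $p$-adic unit, so specializing $T = \chi(1)-1$ pins the sandwiched polygon at each of these vertices. Between consecutive pinned vertices, Lemma~\ref{distance} confines the polygon to a band of vertical width at most $\tfrac{ad(p-1)}{8}$.

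Second, write $r_{u,k'}(T) = \sum_i r_{u,k',i} T^i$ with $r_{u,k',i} \in \ZZ_q$, and set $\pi_\chi = \chi(1)-1$ so that $v_p(\pi_\chi) = \tfrac{1}{(p-1)p^{m_\chi-1}}$. Then
\[
v_p(r_{u,k'}(\pi_\chi)) \geq \min_i \Big( v_p(r_{u,k',i}) + \tfrac{i}{(p-1)p^{m_\chi-1}} \Big),
\]
with equality when the minimum is uniquely attained. The hypothesis $p^{m_\chi} > \tfrac{adp}{8}$ is equivalent to $(p-1)p^{m_\chi-1} > \tfrac{ad(p-1)}{8}$. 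Two distinct indices $i_1 < i_2$ realizing the minimum would force $(p-1)p^{m_\chi-1} \mid (i_2 - i_1)$, but the band confinement restricts the ``window'' of indices that could possibly compete for the minimum to size at most the band width, which is strictly smaller than $(p-1)p^{m_\chi-1}$. Hence the minimum is uniquely attained, and the right-hand side depends only on the intrinsic coefficients $r_{u,k',i}$ (determined by $f$ and $u$) and on $m_\chi$. Multiplying by $p^{m_\chi}$ and using that the pinned vertex values $\tfrac{p\, y_u(kd)}{p-1}$ are already $m_\chi$-independent, one concludes that $p^{m_\chi}\NP_f(C,\omega^u,\chi)$ is independent of $\chi$.

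The main obstacle is rigorously bounding the window of competing indices so that the uniqueness argument applies, i.e., converting the polygon-level sandwich $\HP \leq (p-1)p^{m_\chi-1}\NP_f(C,\omega^u,\chi) \leq \UP$ into a term-by-term constraint on the valuations $v_p(r_{u,k',i})$. This likely requires a careful induction on $k$ combined with repeated use of Proposition~\ref{coincide for chi1}. As a sanity check for the same-conductor case, the Galois group $\Gal(\QQ_p(\mu_{p^{m_\chi}})/\QQ_p)$ acts transitively on primitive $p^{m_\chi}$-th roots of unity while fixing $\ZZ_q$, so $r_{u,k'}(\chi_1(1)-1)$ and $r_{u,k'}(\chi_2(1)-1)$ are Galois-conjugate whenever $\chi_1,\chi_2$ have the same conductor $p^{m_\chi}$, giving an immediate equality of $p$-adic valuations in that case.
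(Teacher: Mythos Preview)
Your proposal has the right ingredients but a genuine gap at the crucial step. The proposition asserts that $p^{m_\chi}\NP_f(C,\omega^u,\chi)$ is independent of $\chi$ \emph{across varying conductors} $p^{m_\chi}>\frac{adp}{8}$ (this is how it is used in the proof of Theorem~\ref{strong}). Your argument, however, only reaches ``the minimum is uniquely attained and depends on the coefficients $r_{u,k',i}$ and on $m_\chi$''. That yields same-conductor independence (for which your Galois remark is already decisive), but not cross-conductor independence: if the minimizing index $i_0$ has $v_p(r_{u,k',i_0})=m_0\geq 1$, then after rescaling by $(p-1)p^{m_\chi-1}$ the height is $m_0(p-1)p^{m_\chi-1}+i_0$, which \emph{does} depend on $m_\chi$. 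The fact that the pinned vertices $(kd,\tfrac{p}{p-1}y_u(kd))$ are $m_\chi$-independent says nothing about the polygon at abscissas $k'\not\equiv 0\pmod d$, so your final sentence is a non sequitur.

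What is missing is exactly the observation the paper makes: one should single out, for each $k$, the least index $i(k)\leq \UP(d,\omega^u,T)(k)$ at which $r_{u,k,i(k)}$ is a $p$-adic \emph{unit} (or set $i(k)=\infty$ if none exists). Then for $\ell<i(k)$ one has $v_p(r_{u,k,\ell})\geq 1$, and Lemma~\ref{distance} bounds $i(k)-\ell\leq \tfrac{ad(p-1)}{8}<(p-1)p^{m_\chi-1}$, so the term at $i(k)$ strictly dominates every earlier term; terms with $j>i(k)$ are trivially smaller in valuation contribution. Hence either $v_p(r_{u,k}(\pi_\chi))=\tfrac{i(k)}{(p-1)p^{m_\chi-1}}$ exactly, or (when $i(k)=\infty$) the point lies strictly above $\UP$ and is irrelevant to the convex hull by Corollary~\ref{upper chi}. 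This gives $(p-1)p^{m_\chi-1}\NP_f(C,\omega^u,\chi)=\NP\big(\{(k,i(k)):k\geq 0\}\big)$, visibly independent of $\chi$. Your divisibility-based uniqueness argument is a shadow of this: it would become rigorous only after you know the competing indices lie in $[\HP(k'),\UP(k')]$, but that in turn requires first knowing the minimum is at most $\UP(k')/(p-1)p^{m_\chi-1}$, which is precisely what the existence of a unit coefficient $r_{u,k,i(k)}$ provides.
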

\begin{proof}
	Recall in \eqref{ru} we denote $$C_f(\omega^u,T,s)=\sum\limits_{k=0}^\infty r_{u,k}(T)s^k.$$  By Proposition~\ref{HP} and Corollary~\ref{passing some certain points}, we are able to write $r_{u,k}(T)$ of the form $$r_{u,k}(T)=\sum\limits_{j=y_u(k)}^\infty r_{u,k,i}T^j.$$
	Assume that $i(k)$ is the smallest integer such that 
	\begin{itemize}
		\item $i(k)\leq \UP(d,\omega^u,T)(k)$, where $\UP(d,\omega^u,T)(k)$ is the height of $\UP(d,\omega^u,T)$ at $x=k$.
		\item The corresponding coefficient $r_{u,k,i(k)}$ is a $p$-adic unit.
	\end{itemize}
	If such $i(k)$ does not exist, we simply put $i(k)=\infty$.
	
	Then we will show that for any $\chi$ satisfying \begin{equation}\label{condition for chi}
	p^{m_\chi}> \frac{adp}{8},
	\end{equation} 
	the Newton polygon $p^{m_\chi-1}(p-1)\NP_f(C,\omega^u,\chi)$ is the same as  $\NP\Big(\Big\{(k,i(k))\;\big|\;k\geq 0\Big\}\Big).$
	
	Since  $C_f(\omega^u,T,s)\in \ZZ_p[\![T]\!],$ for any $\ell< i(k)$ we have \begin{equation}\label{ineq1}
	v_p\big(r_{u,k,\ell}(\chi(1)-1)^\ell\big)\geq 1+(\frac{\ell}{p^{m_\chi-1}(p-1)})
	=\frac{p^{m_\chi-1}(p-1)+\ell-i(k)+i(k)}{p^{m_\chi-1}(p-1)}.
	\end{equation}
	By Lemma~\ref{distance} and the definition of $i(k)$, we know that \begin{equation}\label{aaa}
	i(k)-\ell\leq \frac{ad(p-1)}{8}.
	\end{equation}
	It follows from the inequalities \eqref{aaa}, \eqref{condition for chi} and \eqref{ineq1} that
\begin{multline*}
	v_p\Big(r_{u,k,\ell}\cdot(\chi(1)-1)^\ell\Big)\geq \frac{p^{m_\chi-1}(p-1)-\frac{ad(p-1)}{8}+i(k)}{p^{m_\chi-1}(p-1)}\\
	> \frac{i(k)}{{p^{m_\chi-1}(p-1)}}
	=v_p\Big(r_{u,k,i(k)}\cdot(\chi(1)-1)^{i(k)}\Big).
\end{multline*}
	The inequality above implies that 
	$v_p\big(u_{u,k}\cdot(\chi(1)-1)\big)$ is  
	\begin{itemize}
		\item either equal to $\frac{i(k)}{{p^{m_\chi-1}(p-1)}}$
		\item or greater than $\frac{1}{(p-1)p^{m_\chi-1}}\UP(d,\omega^u,T)(k)$.
	\end{itemize}
	Then this proposition follows directly from Corollary~\ref{upper chi}.
\end{proof}
For a Newton polygon $\NP$ and a rational number $t$ recall the definition of Newton polygon $t+\NP$ in Notation~\ref{properties for NP}~(d). 
\begin{lemma}
	Let $\chi:\ZZ_p\to \CC_p^\times$ be a finite character with conductor $p^{m_\chi}$. Then we have
	\begin{equation}
	\Big\{\alpha\in R(\NP_f(C,\omega^u,\chi))\;\big|\;\alpha< ak\Big\}=\biguplus_{i=0}^{k-1}R\Big(ai+\NP_f(L,\omega^u,\chi)\Big),
	\end{equation}
	where $R$ is defined in Definition~\ref{definition of R}.
\end{lemma}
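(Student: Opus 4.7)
The plan is to start from the defining product expression of the characteristic power series,
\[
C_f(\omega^u,\chi,s) \;=\; \prod_{i=0}^{\infty} L_f(\omega^u,\chi,q^{i}s),
\]
obtained by specializing the definition of $C_f(\omega^u,T,s)$ at $T=\chi(1)-1$. As each factor is a polynomial of degree $dp^{m_\chi-1}$ with constant term $1$, the product converges as a $p$-adic entire power series, and the $p$-adic Newton polygon of a convergent product of such series is the direct sum of the Newton polygons of the factors. Applied to our product, this gives
\[
R\bigl(\NP_f(C,\omega^u,\chi)\bigr)\;=\;\biguplus_{i=0}^{\infty} R\bigl(\NP(L_f(\omega^u,\chi,q^{i}s))\bigr).
\]

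Next I would analyse the effect of the substitution $s\mapsto q^{i}s$ on slopes. Since $v_p(q^{i})=ai$, replacing $s$ by $q^{i}s$ shifts every $p$-adic Newton slope of $L_f(\omega^u,\chi,s)$ upward by exactly $ai$; thus
\[
R\bigl(\NP(L_f(\omega^u,\chi,q^{i}s))\bigr)\;=\;\bigl\{\alpha+ai : \alpha\in R(\NP(L_f(\omega^u,\chi,s)))\bigr\}.
\]
(Note that adding the constant $ai$ to the heights of a Newton polygon, in the sense of Notation~\ref{properties for NP}(d), does not alter the slope multiset, so the notation $R(ai+\NP(L_f(\omega^u,\chi,q^{i}s)))$ on the right-hand side of the lemma agrees with this shifted multiset.)

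The key input is then Theorem~\ref{main}(b): each slope $\alpha_{k,j}$ of $\NP(L_f(\omega^u,\chi,s))$ satisfies
\[
0 \;\le\; \alpha_{k,j} \;\le\; \frac{a(k-1)}{p^{m_\chi-1}}+\frac{\sum_{i=0}^{a-1}u(i)}{d(p-1)p^{m_\chi-1}}+\frac{a(d-1)}{dp^{m_\chi-1}}
\]
for $1\le k\le p^{m_\chi-1}$. A short computation with $u\le q-2$ (so that $\sum u(i)<a(p-1)$) shows that the right-hand side is strictly less than $a$; hence every slope of $L_f(\omega^u,\chi,s)$ lies in the half-open interval $[0,a)$, and every slope of $L_f(\omega^u,\chi,q^{i}s)$ lies in $[ai,a(i+1))$.

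Finally, the intervals $[ai,a(i+1))$ for $i\ge 0$ are pairwise disjoint and cover $[0,\infty)$, so restricting the multiset union above to slopes $<ak$ picks out exactly the contributions with $i=0,1,\dots,k-1$:
\[
\bigl\{\alpha\in R(\NP_f(C,\omega^u,\chi)) : \alpha<ak\bigr\} \;=\; \biguplus_{i=0}^{k-1} R\bigl(ai+\NP(L_f(\omega^u,\chi,q^{i}s))\bigr),
\]
which is the claim. The only nontrivial point is the strict upper bound $\alpha_{k,j}<a$, which is essentially a one-line verification from Theorem~\ref{main}(b); everything else is formal manipulation of Newton polygons.
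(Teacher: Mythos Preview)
Your argument is correct and follows essentially the same route as the paper: both start from the product $C_f(\omega^u,\chi,s)=\prod_{i\ge 0}L_f(\omega^u,\chi,q^is)$, use that the slopes of $L_f(\omega^u,\chi,s)$ lie in $[0,a)$, and then truncate the resulting disjoint union of shifted slope multisets at $ak$. The only difference is that you supply an explicit verification of the inclusion $R(\NP_f(L,\omega^u,\chi))\subset[0,a)$ via Theorem~\ref{main}(b) and the inequality $\sum_j u(j)<a(p-1)$, whereas the paper simply asserts this inclusion; your added care here, and your remark that the height shift $ai+\,\cdot\,$ in the statement is redundant at the level of slopes, are both welcome clarifications.
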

\begin{proof}
	Since $$C_f(\omega^u,\chi,s)=\prod_{i=0}^{\infty}L_f(\omega^u,\chi,q^is)\quad\textrm{and}\quad R\big(\NP_f(L,\omega^u,\chi)\big)\subset [0,a),$$ 
	we know that 
\begin{multline*}
	\Big\{\alpha\in R(\NP_f(C,\omega^u,\chi))\;\big|\;\alpha< ak\Big\}=\biguplus_{i=0}^{k-1} \Big\{p\textrm{-adic Newton slopes of}\ L_f(\omega^u,\chi,q^{i}s)\Big\}\\=\biguplus_{i=0}^{k-1}R\Big(ai+\NP_f(L,\omega^u,\chi)\Big).\qedhere
\end{multline*}

\end{proof}
\begin{proof}[Proof of Theorem~\ref{strong}]
	By Proposition~\ref{independent}, we have 
	\[\begin{split}
	&R\big(\NP_f(L,\omega^u,\chi)\big)\\
	=&\Big\{\alpha\in R\big(\NP_f(C,\omega^u,\chi)\big)\;\Big|\;\alpha< a\Big\}\\
	=&\Big\{\frac{\alpha}{p^{m_\chi}}\;\Big|\; \alpha\in p^{m_\chi}R\big(\NP_f(C,\omega^u,\chi)\big)\ \textrm{and}\ \alpha< ap^{m_\chi}\Big\}\\
	=&\Big\{\frac{\alpha}{p^{m_\chi}}\;\Big|\; \alpha\in p^{m_0}R\big(\NP_f(C,\omega^u,\chi_0)\big)\ \textrm{and}\ \alpha< ap^{m_\chi}\Big\}\\
	=&\Big\{\frac{\alpha p^{m_0}}{p^{m_\chi}}\;\Big|\; \alpha\in R\big(\NP_f(C,\omega^u,\chi_0)\big)\ \textrm{and}\ \alpha< ap^{m_\chi-m_0}\Big\}\\
	=&\biguplus_{i=0}^{p^{m_\chi-m_0}-1}R\Big(\frac{1}{p^{m_\chi-m_0}}\big(ai+\NP_f(L,\omega^u,\chi_0)\big)\Big)\\
	=&\bigcup_{i=0}^{p^{m_\chi-m_0}-1}\Big\{\frac{\alpha_1+ai}{p^{m_\chi-m_0}},\frac{\alpha_2+ai}{p^{m_\chi-m_0}},\dots, \frac{\alpha_{dp^{m_0-1}}+ai}{p^{m_\chi-m_0}}\Big\}.\qedhere
	\end{split}\]
\end{proof}


\begin{thebibliography}{9999}

\bibitem[BFZ]{bfz}
R. Blache, E. Ferard, and H. Zhu,
Hodge--Stickelberger polygons for $L$-functions of exponential sums of $P(x^s)$, 
{\it Math. Res. Lett.}  {\bf 15} (2008), no. 5, 1053--1071.

\bibitem[DWX]{Davis-wan-xiao}
C. Davis, D. Wan and L. Xiao,
Newton slopes for Artin--Schreier--Witt towers, {\it Math. Ann.}  {\bf 364} (2016), no. 3, 1451--1468. 



\bibitem[Li]{li}
X. Li, The stable property of Newton slopes for general
Witt towers, {\it J. Number Theory.}
{\bf 185},  (2018), 144--159.
 

\bibitem[LW]{liu-wan}
C. Liu and D. Wan,
$T$-adic exponential sums over finite fields, {\it Algebra and Number Theory} {\bf3} (2009), no. 5, 489--509. 



\bibitem[LL]{liu-liu}
C. Liu and W. Liu,
Twisted exponential sums of polynomials in one variable, {\it Science China(Mathematics)} {\bf 53}
(2010), no. 9, 2395--2404.

\bibitem[Liu]{liu-liu-niu} 
C. Liu, W. Liu, C. Niu,
$T$-adic exponential sums under diagonal base change, {\it J. Number Theory} {\bf 166} (2016), 276--297.


\bibitem[Liu-Wei]{liu-wei} C. Liu and D. Wei, 
The $L$-functions of Witt coverings, {\it Math. Z.}  {\bf 255} (2007), 95--115. 

 \bibitem[RWXY]{ren-wan-xiao-yu}
 R. Ren, D. Wan, L. Xiao, and M. Yu,
 Slopes for higher rank Artin--Schreier--Witt Towers, Trans. Amer. Math. Soc. {\bf 370} (2018), 6411--6432.



\end{thebibliography}
\end{document}